\newtheorem{theorem}{Theorem}
\newtheorem{assumption}[theorem]{Assumption}
\newtheorem{corollary}[theorem]{Corollary}
\newtheorem{definition}[theorem]{Definition}
\newtheorem{lemma}[theorem]{Lemma}
\newtheorem{proposition}[theorem]{Proposition}
\numberwithin{equation}{section}
\DeclareMathOperator{\E}{{\mathbb E}}
\DeclareMathOperator{\R}{{\mathbb R}}
\DeclareMathOperator{\N}{{\mathbb N}}
\DeclareMathOperator{\supp}{supp}
 \DeclareMathOperator{\sgn}{sgn}
 \DeclareMathOperator{\Id}{Id}
 \DeclareMathOperator{\Poiss}{Poiss}
\providecommand{\eps}{\varepsilon}
\renewcommand{\phi}{\varphi}
\renewcommand{\theta}{\vartheta}
\renewcommand{\subset}{\subseteq}
\renewcommand{\cdot}{{\scriptstyle \bullet} }
\providecommand{\abs}[1]{\lvert #1 \rvert}
\providecommand{\norm}[1]{\lVert #1 \rVert}
\providecommand{\babs}[1]{{\Bigl\lvert #1 \Bigr\rvert}}
\providecommand{\scapro}[2]{\langle #1,#2 \rangle}
\renewcommand{\Re}{\operatorname{Re}}
\renewcommand{\le}{\leqslant}
\renewcommand{\ge}{\geqslant}
\newcommand{\cit}[1]{\citeasnoun{#1}}
\begin{document}

\title{A Donsker Theorem for L\'evy Measures}
\author{Richard Nickl ~~~~~~~~ Markus Rei\ss  \\
\\
\textit{University of Cambridge \footnote{Statistical Laboratory, Department of Pure Mathematics and Mathematical Statistics, University of Cambridge, CB30WB, Cambridge, UK. Email: r.nickl@statslab.cam.ac.uk}} ~ and  \textit{Humboldt-Universit\"at zu Berlin \footnote{Institut f\"ur Mathematik, Humboldt-Universit\"at zu Berlin, Unter den Linden 6, 10099 Berlin, Germany. Email: mreiss@math.hu-berlin.de }}}
\date{First version: January 3, 2012, this version: \today}

\maketitle

\begin{abstract}
Given $n$ equidistant realisations of a L\'evy process $(L_t,\,t\ge 0)$, a
natural estimator $\hat N_n$ for the distribution function $N$ of the L\'evy
measure is constructed. Under a polynomial decay restriction on the
characteristic function $\phi$, a Donsker-type theorem is proved, that is, a
functional central limit theorem for the process $\sqrt n (\hat N_n -N)$ in the
space of bounded functions away from zero. The limit distribution is a
generalised Brownian bridge process with bounded and continuous sample paths
whose covariance structure depends on the Fourier-integral operator ${\cal
F}^{-1}[1/\phi(-\cdot)]$. The class of L\'evy processes covered includes several relevant
examples such as compound Poisson, Gamma and self-decomposable processes. Main ideas in the proof include establishing pseudo-locality of
the Fourier-integral operator and recent techniques from smoothed empirical
processes. 

\medskip

\noindent\textit{MSC 2010 subject classification}: Primary: 46N30;\ Secondary:
60F05.

\noindent\textit{Key words and phrases: uniform central limit theorem,
nonlinear inverse problem, smoothed empirical processes, pseudo-differential
operators, jump measure.}
\end{abstract}

%\tableofcontents

\section{Introduction}

A classical result of probability theory is Donsker's central limit theorem for empirical distribution functions: If $X_1, \dots, X_n$ are i.i.d.~random variables with distribution function $F(t)=P((-\infty, t]),\, t \in \mathbb R$, and if $F_n(t)=P_n((-\infty, t])$ where $P_n = \frac{1}{n} \sum_{k=1}^n \delta_{X_k}$ is the empirical measure, then $\sqrt n (F_n-F)$ converges in law in the Banach space of bounded functions on $\mathbb R$, to a $P$-Brownian bridge. The result in itself and its many extensions have been at the heart of much of our understanding of modern statistics, see the monographs \cit{D99}, \cit{VW96} for a comprehensive account of the foundations of this theory.

The purpose of this article is to investigate a conceptually closely related
problem: at equidistant time steps $t_k=k\Delta$, $k=0,1,\ldots,n$, one
observes a trajectory of a L\'{e}vy process  with corresponding L\'{e}vy (or
jump) measure $\nu$, and wishes to estimate the distribution function $N$ of
$\nu$. Since we do not assume that the time distance $\Delta$ varies (in
particular, no high-frequency regime), we equivalently observe a sample from an
infinitely divisible distribution given by the i.i.d.~increments of the
process. Since $\nu$ is only a finite measure away from zero the natural target
of estimation is $N(t) = \nu((-\infty, t])$ for $t<0$ and
$N(t)=\nu([t,\infty))$ for $t>0$. By analogy to the classical case of
estimating $F$, one aims for an estimator $\hat N$ such that $\sqrt n(\hat N -
N)$ satisfies a limit theorem in the space of functions bounded on $\mathbb R
\setminus (-\zeta, \zeta), \zeta>0$. Statistical minimax theory reveals that
the problem of estimating $N$ is intrinsically more difficult than the one of
estimating $F$ -- it is a nonlinear inverse problem in the terminology of
nonparametric statistics. We discuss this point in more detail below, but note
that it implies that a rate of convergence $1/\sqrt n$ for $\hat N(t)-N(t)$,
even only at a single point $t$, cannot be achieved (by any estimator $\hat N$)
without certain qualitative assumptions on the L\'{e}vy process. Particularly,
the process cannot contain a nonzero Gaussian component. On the other hand, and
perhaps surprisingly, we show in the present article that for a large and
relevant class of L\'evy processes a Donsker theorem can be proved.

\medskip

Similar to Donsker's classical theorem our results have interesting
consequences for statistical inference, such as the construction of confidence
bands and goodness of fit tests. While we do not address these issues
explicitly here and concentrate on spelling out the mathematical ideas, it is
nevertheless instructive to discuss some related literature on statistical
inference on the L\'evy triplet from discrete observations. The basic principle
for understanding the nonlinearity in this setting is already inherent in the
problem of {\it decompounding} a compound Poisson process, which has been
studied in queuing theory and insurance mathematics. In this case the L\'evy
measure $\nu$ is a finite measure and by explicit inversion in the convolution
algebra \cit{BG03} prove a central limit theorem with rate $1/\sqrt{n}$ for a
plug-in estimator of $N$ in an exponentially weighted supremum norm, assuming
that the intensity of the process is known.

For general L\'evy triplets the estimation problem is generally ill-posed in
the sense of inverse problems. In fact, the linearized problem is of
deconvolution-type where the part of the error distribution is taken over by
the observation law itself. This phenomenon, which could be coined {\it
auto-deconvolution}, was first studied by \cit{BR06}. For the general problem
of estimating functionals of the L\'evy measure the results by \cit{NR09} show
in particular that a functional can be estimated at parametric rate
$1/\sqrt{n}$ provided its smoothness outweighs the ill-posedness induced by the
decay of the characteristic function. Comparing to \cit{NR09} we are thus
interested in the low regularity functional $f \mapsto \int_{-\infty}^t f$ (not
covered by their results), and in exact limiting distributions.  Instead of
making inference on the distribution function, one may also be interested in
the associated nonparametric estimation problem for a Lebesgue density of the
L\'evy measure, where the rate $1/\sqrt n$ can never be attained. This problem
was studied in \cit{G09} for L\'evy processes with finite jump activity and a
Gaussian part, \cit{CG10} for a model selection procedure in the finite
variation case, or \cit{T11} for self-decomposable processes. Generalisations
for observations of more general jump processes like L\'evy-Ornstein-Uhlenbeck
processes or affine processes are considered by \cit{JMV05} and \cit{B11}.

\medskip

The proof of our main result contains certain subtleties that we wish to
briefly discuss here: In the classical Donsker case one proves that the
empirical process $\sqrt n (P_n-P)$ is tight in the space of bounded mappings
acting on $\{1_{(-\infty, t]}: t \in \mathbb R\}$. The ill-posedness of the
L\'evy-problem can be roughly understood, after linearisation, as requiring to
show that the empirical process $\sqrt n(P_n-P)$ is tight in the space of
bounded mappings acting on the class
\begin{equation}\label{EqIntro}
\mathcal G_{\phi}=\{{\cal F}^{-1}[1/\phi(-\cdot)]\ast 1_{(-\infty, t]} : |t| \ge \zeta)\},
\end{equation}
where $\zeta>0$ is arbitrary, ${\cal F}$ is the Fourier transform and where $\phi = \mathcal FP$ is the characteristic function of the increments of the L\'{e}vy process. In fact, the situation is more complicated than that, but the above simplification highlights the main problem. Convolution with ${\cal F}^{-1}[1/\phi]$ is just a way of writing deconvolution with $P={\cal F}^{-1}[\phi]$, which is mathematically understood as the action of a pseudo-differential operator, and the class $\mathcal G_{\phi}$ can be shown \textit{not} to be $P$-Donsker (arguing as in Theorem 7 in \cit{N06}, for instance), unless in very specific situations (effectively in the compound Poisson case discussed above). In other words, the empirical process is not tight when indexed by these functions.

A starting point of our analysis is that for certain L\'{e}vy processes a generalised $P$-Brownian bridge $\mathbb G^\phi$
with bounded sample paths can be defined on $\mathcal G_\phi$, uniformly
continuous for the intrinsic covariance metric of $\mathbb G^\phi$, see Theorem \ref{scgauss}. Roughly speaking this means that a tight limit process exists, and that a limit theorem at rate $1/\sqrt n$ may hold if one
replaces the empirical process by a smoothed one. This hope
is nourished by the phenomenon -- first observed, in a general empirical
process setting unrelated to the present situation, by \cit{RW00}, and recently
developed further in several directions by \cit{GN08} -- that smoothed
empirical processes may converge in situations where the unsmoothed process
does not. The results in \cit{GN08} apply to unbounded classes, so in particular to $\mathcal G_\varphi$, and this idea in combination with a
thorough analysis of the pseudo-differential operator ${\cal
F}^{-1}[1/\phi(-\cdot)]$ are at the heart of our proofs.

The paper is organised as follows: Section \ref{setm} contains the exact conditions on the model, the construction
of the estimator and the main result. In Section 3 the model assumptions, some
important examples and potential extensions are discussed. Finally, the
complete proof of the Donsker-type result is given in Section 4, divided into
the finite-dimensional central limit theorem and the uniform tightness result.

\section{The Setting and Main Result} \label{setm}

We observe a real-valued L\'evy process $(L_t,\,t\ge 0)$ at equidistant time points $t_k=k\Delta$, $k=0,1,\ldots,n$, for $\Delta>0$ fixed. It will be seen to be natural (Section \ref{discu}) to restrict to L\'evy processes of (locally) finite variation. In this case the characteristic function of the increments $X_k:=L_{t_k}-L_{t_{k-1}}$  is given by
\[ \phi(u)=\E[\exp(iuL_\Delta)]=e^{\Delta\psi(u)}\text{ where } \psi(u)=i\gamma u + \int_{\R\setminus\{0\}} (e^{iux}-1)\,\nu(dx)
\]
with drift parameter $\gamma\in\R$ and L\'evy (or jump) measure $\nu$ satisfying $\int_{\R} (\abs{x}\wedge 1)\,\nu(dx)<\infty$ (due to finite variation). The increments $X_1,\ldots,X_n$ are i.i.d. and we write $P$ for the law of $X_k$ and $p$ for its density (if it exists) as well as $P_n=\frac 1n\sum_{k=1}^n\delta_{X_k}$ and $\phi_n(u)={\cal F}P_n(u)=\int e^{iux}dP_n(x)$ for the empirical measure and empirical characteristic function, respectively. Throughout $\cal F$ denotes the Fourier (-Plancherel) transform acting on finite measures, on the space $L^1(\mathbb R)$ of integrable or on the space $L^2(\mathbb R)$ of square-integrable functions on $\mathbb R$, see e.g. \cit{K76} for the standard Fourier techniques that we shall employ.

If $\nu$ has a finite first moment, then the weighted L\'evy measure $x\nu(dx)$
can be identified directly from the law of $X_k$ in the Fourier domain:
\begin{equation}\label{EqIdent} \frac{1}{i\Delta}\frac{\phi'(u)}{\phi(u)}=-i\psi'(u)=\gamma+\int e^{iux}x\nu(dx)=\gamma+{\cal F}[x\nu](u).
\end{equation}
Our goal is to estimate the cumulative distribution function of $\nu$,
\begin{equation}\label{EqN}
N(t):=\begin{cases}\nu((-\infty,t]),& t<0, \\ \nu([t,\infty)), & t>0,\end{cases}
\end{equation}
from the sample $X_1, \dots, X_n$.  Note that in general $N(t)$ tends to infinity for $t\to 0$. If we denote by ${\cal F}^{-1}$ the inverse Fourier transform, then the relation \eqref{EqIdent} suggests a natural empirical estimate of $N(t)$ (we shall see below that $\gamma$ can be neglected),
\begin{equation}\label{EqNhat}
\hat N_n(t):=\int_{\R}g_t(x){\cal F}^{-1}\left[\frac{1}{i\Delta}\frac{\phi_n'}{\phi_n} {\cal F}K_h \right](x)\,dx\text{ with } g_t(x):=\begin{cases} x^{-1}{\bf 1}_{(-\infty,t]}(x),&\;t<0,\\x^{-1}{\bf 1}_{[t,\infty)}(x),&\;t>0,\end{cases}
\end{equation}
where $K$ is a band-limited kernel function  and $K_h(x):=h^{-1}K(x/h)$. In the sequel the kernel will be required to satisfy
\begin{equation}\label{EqKProp}
\int K=1, \quad \supp({\cal F}K)\subset[-1,1]\text{ and } \abs{K(x)}+\abs{K'(x)}\lesssim (1+\abs{x})^{-\beta} \text{ for some $\beta>2$.}
\end{equation}
Throughout, we shall write $A_p\lesssim B_p$ if $A_p\le CB_p$ holds with a uniform constant $C$ in the parameter $p$ as well as $A_p\thicksim B_p$ if $A_p\lesssim B_p$ and $B_p\lesssim A_p$.

The smooth spectral cutoff induced by multiplication with ${\cal F}K_h$ is desirable for various reasons; in particular, it will imply that $\hat N_n$ is well-defined with probability tending to one. By Plancherel's formula, we have the alternative representation
\[ \hat N_n(t):=\frac{1}{2\pi i\Delta}\int_{\R}{\cal F}g_t(-u)\frac{\phi_n'(u)}{\phi_n(u)} {\cal F}K_h(u)\,du.
\]
Heuristically, for $h_n \to 0$ we expect consistency $\hat N_n(t) \to N(t)$ in probability, $t\not=0$, because as $h_n \to 0$ we have $K_{h_n}\to \delta_0$ (the Dirac measure in zero)
and thus ${\cal F}K_{h_n}(u)\to 1$ which may be combined with the law of large numbers for both $\phi_n$ and $\phi_n'$. For this argument to work it is important to note that the drift $\gamma$ induces a point measure in zero for ${\cal F}^{-1}[\phi'/\phi]$ which is outside the support of $g_t$, compare Section \ref{SecDrift} below. For our precise results we shall need the following conditions on the data-generating L\'evy process. Throughout the paper we often write $\phi^{-1}$ for $1/\phi$.

\begin{assumption} \label{gen}
We require for some $\eps>0$:
\begin{enumerate}
\item $\int \max(\abs{x},\abs{x}^{2+\eps})\,\nu(dx)<\infty$;
\item $x\nu$ has a bounded Lebesgue density and $\abs{{\cal F}[x\nu](u)}\lesssim (1+\abs{u})^{-1}$;
\item $(1+\abs{u})^{-1+\eps}\phi^{-1}(u)\in L^2(\R)$.
\end{enumerate}
\end{assumption}

Assumption \ref{gen}(a) imposes finite variation, ensuring the identification identity \eqref{EqIdent}, as well as finite $(2+\eps)$-moments of $\nu$ and $P$, since by Thm. 25.3 in \cit{Sa99}
\begin{equation}\label{EqMoments}
\int_{\R} \abs{x}^{2+\eps}\nu(dx)<\infty \iff \int_{\R} \abs{x}^{2+\eps}P(dx)<\infty.
\end{equation}
As $\hat N$ is based on $\phi_n'(u)$, and since a central limit theorem is desired, it is natural to require a finite second moment of $X_k$. The additional $\eps$ in the power will allow to apply the Lyapounov criterion in the CLT for triangular schemes and to obtain uniform in $u$ stochastic bounds for $\phi_n'(u)-\phi'(u)$ over increasing intervals. Assumptions \ref{gen}(b,c) are discussed in more detail after the following theorem, which is the main result of this article.

For $\zeta>0$, let $\ell^\infty((-\infty, -\zeta] \cup [\zeta, \infty))$ be the space of bounded real-valued functions on $(-\infty, -\zeta] \cup [\zeta, \infty)$ equipped with the supremum norm. Convergence in law in this space, denoted by $\to^\mathcal L$, is defined as in \cit{D99}, p.94.

\begin{theorem}\label{Thm1}
Suppose that Assumption \ref{gen} is satisfied, $\zeta>0$ and $h_n\thicksim n^{-1/2}(\log n)^{-\rho}$ for some $\rho>1$. Then as $n \to \infty$
$$\sqrt n (\hat N_n - N) \to^\mathcal L \mathbb G^\varphi~~\text{in}~\ell^\infty((-\infty, -\zeta] \cup [\zeta, \infty)),$$
where $\mathbb G^\varphi$ is a centered Gaussian Borel random variable in $\ell^\infty((-\infty, -\zeta] \cup [\zeta, \infty))$ with covariance structure given by
$$ \Sigma_{t,s}=\frac{1}{\Delta^2} \int_\mathbb R \Big({\cal F}^{-1}\left[\frac{1}{\phi(-\cdot)}\right] \ast (xg_t(x))\Big) \times \Big({\cal F}^{-1} \left[\frac{1}{\phi(-\cdot)}\right] \ast (xg_s(x))\Big)\, P(dx)$$
and where $g_t$ is given in \eqref{EqNhat}.
\end{theorem}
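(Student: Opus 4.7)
The plan is the classical two-step approach for functional central limit theorems in $\ell^\infty$: first establish convergence of the finite-dimensional distributions of $\sqrt n(\hat N_n - N)$, and then prove asymptotic equicontinuity in the pseudo-metric induced by $\Sigma$. Since Theorem \ref{scgauss} supplies a tight limit $\mathbb G^\phi$ with uniformly continuous sample paths, these two steps together yield convergence in $\ell^\infty((-\infty, -\zeta] \cup [\zeta, \infty))$.

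The first technical step is to linearise the estimator around $(\phi,\phi')$. Expanding
$$\frac{\phi_n'}{\phi_n} - \frac{\phi'}{\phi} = \frac{\phi_n' - \phi'}{\phi} - \frac{\phi'(\phi_n - \phi)}{\phi^2} + R_n,$$
multiplying by $\mathcal FK_h/(i\Delta)$, and applying Plancherel against $\mathcal Fg_t(-\cdot)$, the dominant linear part becomes a centred empirical integral $n^{-1/2}\sum_{k=1}^n (\Phi_t^{(n)}(X_k) - \E\Phi_t^{(n)}(X_k))$. Using $\phi_n' - \phi' = \mathcal F[x(P_n - P)]$ (up to a sign fixed by the Fourier convention) and inverting, the functions $\Phi_t^{(n)}$ converge pointwise as $h_n \to 0$ to
$$\Phi_t(x) = \Delta^{-1}\bigl(\mathcal F^{-1}[1/\phi(-\cdot)] \ast (yg_t(y))\bigr)(x),$$
whose covariance $\int \Phi_t \Phi_s \, dP$ equals $\Sigma_{t,s}$. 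The drift contribution is a Dirac mass at the origin, which is outside $\supp(g_t)$ since $|t| \ge \zeta > 0$, and hence can be ignored.

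Finite-dimensional convergence then follows from the multivariate Lindeberg--L\'evy CLT applied to the triangular array $\Phi_t^{(n)}(X_k)$: Assumption \ref{gen}(a) with the extra $\eps$ provides the Lyapounov condition and, via \eqref{EqMoments}, the finite second moment of $X_k$; Assumption \ref{gen}(c) yields uniform $L^2(P)$-bounds on $\Phi_t^{(n)}$ together with convergence of covariances to $\Sigma_{t,s}$; and the bias caused by $\mathcal FK_h - 1$ is negligible at rate $\sqrt n h_n \sim (\log n)^{-\rho} \to 0$. The quadratic part of $R_n$, after multiplication by $\mathcal FK_h/\phi^2$, is controlled by Cauchy--Schwarz combined with uniform-in-$u$ stochastic bounds on $\phi_n - \phi$ and $\phi_n' - \phi'$ on the frequency window $|u| \le 1/h_n$ (the extra $\eps$ in Assumption \ref{gen}(a) is used here), yielding an $o_P(1)$ error pointwise in $t$.

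The main obstacle is the tightness step. Applying $\sqrt n (P_n - P)$ directly to the class $\mathcal G_\phi$ of \eqref{EqIntro} is hopeless since that class fails to be $P$-Donsker. The saving device is the smoothing by $\mathcal FK_h$: the resulting smoothed empirical process is amenable to the techniques of \cit{GN08}. Concretely, I would decompose $\Phi_t^{(n)}$ into a near-diagonal part, where the pseudo-locality of the operator $\mathcal F^{-1}[1/\phi(-\cdot)]$ (Assumptions \ref{gen}(b,c) produce polynomial off-diagonal decay of its Schwartz kernel) gives the sharp pointwise control needed for a smoothed empirical process modulus inequality, and a far-diagonal part which is regular enough to be handled by standard bracketing-entropy bounds for shifted indicators. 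The delicate balancing is to ensure, with $h_n \sim n^{-1/2}(\log n)^{-\rho}$ and $\rho > 1$, that both the smoothing bias and the stochastic error from the nonlinear remainder fall below $1/\sqrt n$ uniformly in $|t| \ge \zeta$; the logarithmic factor $(\log n)^{-\rho}$ is precisely what dominates the entropy integrals for the smoothed process and allows the tightness inequality to close.
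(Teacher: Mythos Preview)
Your high-level architecture---linearise, prove the fidi CLT via Lyapounov, then establish asymptotic equicontinuity using the smoothed empirical process theory of \cit{GN08}---matches the paper's. But you have omitted the key technical device, and your tightness sketch would not close as written.

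The paper's entire argument rests on a decomposition you never introduce: each $g_t$ is split as $g_t=g_t^c+g_t^s$ with $g_t^c\in H^1(\R)$ and $g_t^s$ of bounded-variation type satisfying $|\mathcal F g_t^s(u)|,|\mathcal F[xg_t^s](u)|\lesssim(1+|u|)^{-1}$ (Lemma~\ref{adm}). This \emph{admissibility} splitting is used three times. First, your bias claim ``negligible at rate $\sqrt n\,h_n$'' is not available without it, since $\mathcal F g_t\cdot\psi'\cdot(\mathcal F K_h-1)$ is not naively integrable; the paper treats the $g^c$- and $g^s$-parts by different estimates to get $O(h\log h^{-1})$. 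Second, the Lyapounov bound \eqref{MainCond} is verified piecewise: $g_t^c$ via Cauchy--Schwarz against $\|\mathcal F^{-1}[\phi^{-1}]\|_{H^{-1}}$, and $g_t^s$ via an $H^\eps\hookrightarrow L^{2+\eps}$ embedding after one spectral derivative. Third, and most importantly, the tightness problem is reduced by writing the integrand as $T_1+T_2+T_3+T_4$; the three ``easy'' terms $T_2,T_3,T_4$ are shown to lie in \emph{fixed} $P$-Donsker balls of $H^1$ or weighted Sobolev spaces, uniformly in $h$, so only the critical term $T_1=\mathcal F^{-1}[\phi^{-1}(-u)\mathcal F[ixg_t^s]\mathcal F K_h]$ needs the smoothed process machinery.

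Your proposed ``near-diagonal/far-diagonal'' decomposition via polynomial off-diagonal decay of the Schwartz kernel is not what the paper does and is problematic: $\phi^{-1}$ is not a classical symbol (no uniform bounds on higher $u$-derivatives are assumed), so standard kernel-decay estimates are unavailable. The paper's pseudo-locality (Proposition~\ref{PropL2PEst}) uses only a single spectral integration by parts, $(\phi^{-1})'\in L^2$, and the disjointness of $\supp f$ from the singularity of $p$ at zero. For the critical term the decisive identification is $T_1=i\,q(\cdot-t)\ast K_h^{(0)}$ with $q=\mathcal F^{-1}[\phi^{-1}(-u)(1+iu)^{-1}]\in B^{(1+\eps)/2}_{1,\infty}$ (via the Mihlin multiplier lemma~\ref{LemmaLevyProp}(c)); translates of a fixed $B^{(1+\eps)/2}_{1,\infty}$-function convolved with $K_h^{(0)}$ have BV-envelope $M_n\lesssim h_n^{-\alpha'}$ and VC-type entropy, and the abstract Theorem~\ref{pregaussian} then closes under the envelope condition $h_n^{-\alpha'}\lesssim n^{1/4}(\log n)^{-1/2}$, which is what the choice $h_n\thicksim n^{-1/2}(\log n)^{-\rho}$ with $\rho>1$ secures---not, as you suggest, a domination of entropy integrals by the logarithmic factor alone.
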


In view of $xg_t(x)={\bf 1}_{(-\infty,t]}(x)$ for $t<0$ and symmetrically for $t>0$, the representation of the covariance in the theorem above is intuitively appealing when compared to the classical Donsker theorem. Its rigorous interpretation, however, needs some care, as it is not quite clear how the pseudo-differential operator ${\cal F}^{-1}[\phi^{-1}(-\cdot)]$ acts on the indicator function $x g_t(x)$.  One rigorous representation that follows from our proofs uses
$${\cal F}^{-1}\left[\phi^{-1}(-\cdot)\right] \ast 1_{(-\infty, t]} = {\cal F}^{-1}\left[(1+iu)^{-1}\phi^{-1}(-u)\right] \ast (1_{(-\infty, t]}+\delta_t)
$$
together with the fact that ${\cal F}^{-1}[(1+iu)^{-1}\phi^{-1}(-u)]$ can be shown to be contained in $L^1(\R) \cap L^2(\R)$ under Assumption \ref{gen} (using lifting properties of Besov spaces), so that the right-hand side of the last display is defined almost everywhere.

Another more explicit representation, which also implies that $\Sigma_{t,t}<\infty$, is the following: Note that formally
\[
\int_\mathbb R {\cal F}^{-1}\left[\frac{1}{\phi(-\cdot)}\right] \ast (xg_t(x))dP(x) =\frac{1}{2\pi}\int_{\R}({\cal F}[xg_t](-\cdot))(u)\phi^{-1}(u)\phi(u) \,du
=(xg_t)(0)=0,
\]
which explains why the covariance in Theorem \ref{Thm1} is centered for $t \neq 0$. Moreover, ${\cal F}[xg_t]=i^{-1}({\cal F}[g_t])'$ and integration by parts gives rise to the formally equivalent representation
\begin{equation} \label{gausscov}
\Sigma_{t,s} = (i\Delta)^{-2} \int_\mathbb R h_t(x) h_s(x) P(dx)
\end{equation}
where
\begin{equation*}
h_t(x) = {\cal F}^{-1}[\phi^{-1}(-u){\cal F}g_t(u)](x)ix+{\cal F}^{-1}[(\phi^{-1})'(-u){\cal F}g_t(u)](x),
\end{equation*}
and where we note that $i^{-1}h_t$ is real-valued. This expression for $h_t$ is the one we shall employ in our proofs, as it can be shown to be rigorously defined in $L^2(P)$ under the maintained assumptions, see (\ref{htrig}) below for more details.

Moreover the last representation immediately suggests consistent estimators of $\Sigma_{t,s}$ based on the empirical characteristic function $\phi_n$ and the empirical measure $P_n$, useful when one is interested in the Gaussian limiting distribution for inference purposes on $N$.

\section{Discussion} \label{discu}

\subsection{The regularity conditions}

We remark first that the results in \cit{NR09} imply that we can attain a $1/\sqrt{n}$-rate for estimation only if the characteristic function decays at most with a low polynomial order. This restricts the classes of L\'evy processes automatically to the (locally) finite variation case (e.g. proof of Prop. 28.3 in \cit{Sa99}), and moreover excludes all L\'evy processes with a nonzero Gaussian component.

Let us next discuss Assumption \ref{gen}(c) which describes the lower bound we
need on the ill-posedness of the estimation problem. It holds for all compound
Poisson processes, in which case $\abs{\phi^{-1}(u)}$ is bounded, but also for
Gamma processes with $\alpha\in(0,1/(2\Delta))$ and  for pure-jump
self-decomposable processes with not too high jump activity at zero, see
Proposition \ref{PropEx} below. Recall (e.g. \cit{Sa99}, Section 15) that
self-decomposable distributions describe the limit laws of suitably rescaled
sums of independent random variables as well as the stationary distributions of
L\'evy-Ornstein-Uhlenbeck processes, and thus give rise to a rich nonparametric
class of L\'evy measures. More generally, if $\E[e^{iuL_1}]$ decays polynomially, then there exists
a $\Delta_0>0$ such that for all $\Delta<\Delta_0$ the corresponding
characteristic function $\phi(u)=\E[e^{iuL_\Delta}]$ satisfies
$\abs{\phi^{-1}(u)}\lesssim (1+\abs{u})^\alpha$ for $\alpha<1/2$, so
Assumption \ref{gen}(c) holds for any polynomially decaying $\phi$ if the sampling frequency is large (i.e., $\Delta$ small) enough. Abstractly, Assumption \ref{gen}(c) means that the
pseudo-differential operator ${\cal F}^{-1}[\phi^{-1}]$ of deconvolution is an
element of the $L^2$-Sobolev space $H^{-1+\eps}(\R)$ of negative order
$\eps-1$. In the simpler problem of statistical deconvolution an analogous
restriction for the characteristic function of the error variables is
necessary, even if one is only interested in rates of convergence of an
estimator, and the situation is similar here: The lower bound techniques from
Theorem 4.4 of \cit{NR09} or Theorem 1 of \cit{LN11} can be adapted to the
present situation to imply, for instance, that for Gamma processes with $\alpha
> 1/(2\Delta)$ the 'parametric' rate $1/\sqrt n$ cannot be achieved by any
estimator in the L\'evy estimation problem considered here, so that Assumption
\ref{gen}(c) is in this sense sharp for Theorem \ref{Thm1}.

The smoothness condition on $x\nu$ in Assumption \ref{gen}(b) is not very restrictive: it is satisfied whenever the weighted L\'evy measure $x\nu$ has a density whose weak derivative is a finite measure (noting $x\nu\in L^1(\R)$ by Assumption \ref{gen}(a)). As simple examples, any compound Poisson process with a jump density of bounded variation and a finite first moment satisfies this condition, as does any Gamma process. More generally, most self-decomposable processes  satisfy this condition, see Proposition \ref{PropEx} below.

The key role of Assumption \ref{gen}(b) is not to enforce smoothness of $\nu$, but to ensure {\it pseudo-locality} of the deconvolution operator
${\cal F}^{-1}[\phi^{-1}]$ in the sense that the location of singularities like
the jump in the indicator ${\bf 1}_{(-\infty,t]}$ remains unchanged under
deconvolution. A similar situation arises in standard deconvolution problems, see the recent paper \cit{SHMD12}. In the spirit of the theory of pseudo-differential operators this
is established by differentiating in the spectral domain, see \eqref{EqPsiId}
below for details,
\[{\cal F}^{-1}[\phi^{-1}(-u)]=\frac{1}{i\cdot}{\cal F}^{-1}[(\phi^{-1}(-u))']\]
under the condition that $(\phi^{-1})'=\Delta\psi'\phi^{-1}\in L^2(\R)$.
Neglecting the drift, $\psi'$ is ${\cal F}[ix\nu]$ and Assumptions
\ref{gen}(b), \ref{gen}(c) together ensure $(\phi^{-1})'\in L^2(\R)$, see Lemma
\ref{LemmaLevyProp} below. As discussed later, the example of a superposition
of a Gamma and Poisson process provides a simple concrete situation where a
violation of this condition renders the asymptotic variance in Theorem
\ref{Thm1} infinite.

There is another interesting interaction between Assumptions \ref{gen}(b) and
\ref{gen}(c). A decay rate $\abs{u}^{-1}$ for ${\cal F}[x\nu](u)$ is the
maximal possible smoothness requirement under \ref{gen}(c); otherwise
$\abs{\Re(\psi'(u))}\le\abs{{\cal F}[x\nu](u)}=o(\abs{u}^{-1})$ would imply
$\abs{\phi(u)}=\exp(\Re(\Delta\psi(u)))=\exp(o(\log(u)))$ for $\abs{u}\to\infty$,
excluding polynomial decay of the characteristic function $\phi$.

\subsection{Examples}

We now discuss a few examples in more detail.

\begin{description}
\item[Compound Poisson Processes.]
The compound Poisson case where $\nu$ is a finite measure is covered in
Theorem \ref{Thm1}. Note that due to the presence of a point mass at zero
in $P$ the characteristic function satisfies $\inf_u\abs{\phi(u)}\ge
\exp(-2\nu(\R))>0$ ($\Delta=1$). Therefore Assumption \ref{gen}(c) is trivially
satisfied. Assumption \ref{gen}(b) requires that the law of the jump sizes
has a density $\nu$ such that $x\nu(x)$ is bounded and has the respective
decay property in the Fourier domain. Assumption \ref{gen}(a) just
postulates $(2+\eps)$ finite moments of the jump law. Compared to
\cit{BG03} we thus obtain directly a uniform central limit without
weighting, exponential moments and, perhaps more importantly, without prior knowledge of the intensity, yet our result
holds only away from the origin and under Assumption \ref{gen}(b).

Stronger results can be obtained by adapting our method to this specific
case because the distribution function $N$ of $\nu$ is defined classically
for all $t\in\R$ and Assumption \ref{gen}(b) is not required to ensure
pseudo-locality of deconvolution. In fact, deconvolution reduces to
convolution with a signed measure because of ($\bar\nu^{\ast k}$ denotes $k$-fold convolution)
\[ {\cal F}^{-1}[\phi^{-1}(-\cdot)]=\sum_{k=0}^\infty\frac{e^\lambda(-1)^k}{k!}\bar\nu^{\ast k} \text{ with } \lambda:=\nu(\R),\; \bar\nu(A):=\nu(-A).
\]
Therefore, ${\cal F}^{-1}[\phi^{-1}(-\cdot)]\ast {\bf 1}_{(-\infty,t]}$ is
a bounded function, in fact of bounded variation, and the uniform CLT for
the linearized stochastic term follows directly (since $BV$-balls are
universal Donsker classes). The remainder term remains negligible whenever
the inverse bandwidth $h^{-1}$ grows slower than exponentially in $n$.
Choosing for instance $h_n\thicksim \exp(-\sqrt{n})$ yields a pointwise CLT
for $\sqrt{n}(\hat N_n(t)-N(t))$ for all $t\in\R$ if the bias is
negligible, e.g. if $N$ has some positive H\"older regularity at $t$. We do
not pursue a detailed derivation of this specific case here.

\item [Gamma Processes.]
The family of Gamma processes satisfies $X_k\sim \Gamma(\alpha\Delta,\lambda)$, with probability density $\gamma(y; \alpha \Delta, \lambda) = (1/\Gamma(\alpha \Delta))\lambda^{\alpha \Delta} y^{\alpha \Delta -1} e^{-\lambda y}$, L\'evy measure $\nu(dx)=\alpha x^{-1}e^{-\lambda x}{\bf 1}_{\R^+}(x)\,dx$ and characteristic function $\phi(u)=(1-iu/\lambda)^{-\alpha\Delta}$. For simplicity we consider $\lambda=1$ and, in order to satisfy Assumption \ref{gen}(c), we restrict to $\alpha\in(0,1/(2\Delta))$. We denote the density  of $\Gamma(\beta,1)$ by $\gamma_\beta$ and its distribution function by $\Gamma_\beta$. Then
\[{\cal F}^{-1}[\phi^{-1}]={\cal F}^{-1}[(1-iu)^{\alpha\Delta-1}(1-iu)]=\gamma_{1-\alpha\Delta}\ast (\Id+D)
 \]
holds with the differential operator $D$. This is a well known form of the fractional derivative operator of order $\alpha\Delta$. We deduce
\[ {\cal F}^{-1}[\phi^{-1}(-\cdot)]\ast {\bf 1}_{[t,\infty)}=\gamma_{1-\alpha\Delta}(-\cdot)\ast({\bf 1}_{[t,\infty)}-\delta_t).
\]
Hence, for $t>0$ the asymptotic variance of Theorem \ref{Thm1} is given by
\[ \Sigma_{t,t}=\int_0^\infty (1-\Gamma_{1-\alpha\Delta}(t-x)-\gamma_{1-\alpha\Delta}(t-x))^2\gamma_{\alpha\Delta}(x)\,dx.
\]
Note that the integrand has poles of order $(\alpha\Delta)^2$ at $x=t$ and of order $1-\alpha\Delta$ at $x=0$ such that the variance is finite if and only if $\alpha\Delta<1/2$ and $t\not=0$. So, in this case, Assumption \ref{gen}c) prevents $\Sigma_{tt}$ from being infinite.

Moreover, the Gamma process case can serve as a basic example for all the
theory that follows. It reveals the problem that standard $L^p$-theory or
non-local Fourier analysis will not be sufficient in this context as different locations of the singular support (the poles) are required to ensure finiteness of $\Sigma_{t,t}$.

\item[Gamma plus Poisson process.]
Let us briefly give a simple counterexample that pseudo-locality of the
deconvolution operator is important. If the L\'evy process is a
superposition of a Gamma process as above with $\alpha\in(0,1/(2\Delta))$
and of an independent Poisson process with intensity $\lambda>0$, the
density $p$ of the increments is given by the convolution of the
$\gamma_{\alpha\Delta}$-density with a $\Poiss(\lambda)$-law and thus has
poles of order $1-\alpha\Delta$ at $x\in\N_0$. On the other hand, the
deconvolution operator is given by
\begin{align*}
{\cal
F}^{-1}[\phi^{-1}(-\cdot)]&=\sum_{k=0}^\infty\frac{e^\lambda(-1)^k}{k!}\delta_{-k}\ast
\gamma_{1-\alpha\Delta}(-\cdot)\ast(\Id-D)\\
&=\sum_{k=0}^\infty\frac{e^\lambda(-1)^k}{k!}
\gamma_{1-\alpha\Delta}(-\cdot-k)\ast(\Id-D).
\end{align*}
As in the pure Gamma case, this shows that $\Sigma_{t,t}$ is finite if and
only if none of the poles at $x=t-k$, $k\in\N_0$, and at $x=k$, $k\in\N_0$,
of the respective functions coincide, which is the case only for
non-integer $t\notin\N_0$. Consequently, we cannot hope even to prove a
pointwise CLT with rate $1/\sqrt{n}$ at integers $t$. This case that
singularities are just translated by convolution with point measures is
excluded by the regularity requirement for $x\nu$ in Assumption
\ref{gen}(b).

\item[Self-Decomposable Processes.]
We finally consider the class of self-decomposable processes, cf.
\cit{Sa99}, Section 15, which contains all Gamma processes. For any
pure-jump self-decomposable process we have $\nu(dx)=k(x)/\abs{x}\,dx$ with
a unimodal $k$-function increasing on $(-\infty,0)$ and decreasing on
$(0,\infty)$. If the limits $k(0-)$ and $k(0+)$ of $k$ at zero are finite,
then $k$ is a function of bounded variation and so is $\sgn(x)k(x)$, the
density of $x\nu$. The moment condition of Assumption \ref{gen}(a) in
particular implies $\sgn(x)k(x)\in L^1(\R)$ which yields Assumption
\ref{gen}(b). It is quite remarkable that the probabilistic property of
self-decomposability implies the analytic property of pseudo-locality for
the deconvolution operator.

For the characteristic function of self-decomposable processes we have $\abs{\phi(u)}\gtrsim (1+\abs{u})^{-\alpha\Delta}$ with $\alpha=k(0-)+k(0+)$, which follows exactly as the proof of Lemma 2.1 in \cit{T11}. The latter is the counterpart to Lemma 53.9 in \cit{Sa99}, where an upper bound of the same order times a logarithmic factor is shown. We conclude that Assumption \ref{gen}(c) translates to the condition $\alpha<1/(2\Delta)$.
\end{description}

We note that Assumption \ref{gen}(a) and \ref{gen}(b) remain true under superposition of independent L\'evy processes and we collect the findings in an explicit statement.

\begin{proposition} \label{PropEx}
Assumption \ref{gen} is satisfied for
\begin{enumerate}
\item a compound Poisson process whenever the jump law has a density $\nu$ such that $x\nu$ is of bounded variation and
$\nu$ has a finite $(2+\eps)$-moment,
\item a Gamma process with parameters $\alpha\in(0,1/(2\Delta))$ and $\lambda>0$,
\item a pure-jump self-decomposable process whenever its $k$-function satisfies $\int \max(1,\abs{x}^{1+\eps})k(x)\,dx<\infty$ and $k(0-)+k(0+)<1/(2\Delta)$,
\item and for any L\'evy process which is a sum of independent compound Poisson and self-decomposable processes of the preceding types.
\end{enumerate}
\end{proposition}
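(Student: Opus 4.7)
The plan is to verify Assumption~\ref{gen}(a)--(c) in each of the three base cases (a)--(c), and then to observe that all three conditions are stable under independent superposition, which yields (d). The moment condition~\ref{gen}(a) is the standing hypothesis for the compound Poisson case; it holds for $\Gamma(\alpha\Delta,\lambda)$ because that law has moments of all orders; and for self-decomposable processes the substitution $\nu(dx)=k(x)\abs{x}^{-1}\,dx$ rewrites $\int\max(\abs{x},\abs{x}^{2+\eps})\,\nu(dx)$ as $\int\max(1,\abs{x}^{1+\eps})k(x)\,dx$, which is exactly the quantity controlled by the hypothesis on $k$.

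For the smoothness condition~\ref{gen}(b) I would use the elementary fact that any $f\in L^1(\R)\cap BV(\R)$ is bounded and satisfies $\abs{{\cal F}f(u)}\lesssim (1+\abs{u})^{-1}$ (integration by parts against the distributional derivative of $f$). It therefore suffices to identify the density of $x\nu$ with an $L^1\cap BV$ function in each case: in (a) this is the hypothesis itself; for the Gamma process this density is the monotone, bounded and integrable function $\alpha e^{-\lambda x}{\bf 1}_{\R^+}(x)$; for self-decomposable processes it equals $\sgn(x)k(x)$, whose total variation is controlled by the finite boundary values $k(0-)+k(0+)$ at the origin together with the monotonicity of $k$ on each half line, while $L^1$-integrability is contained in~\ref{gen}(a).

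For the ill-posedness condition~\ref{gen}(c) I treat the three cases separately. Compound Poisson gives $\abs{\phi(u)}\ge\exp(-2\Delta\nu(\R))>0$, so $\phi^{-1}$ is bounded on $\R$ and (c) holds for any $\eps<1/2$. For a Gamma process $\abs{\phi(u)}^{-1}=(1+u^2/\lambda^2)^{\alpha\Delta/2}\thicksim (1+\abs{u})^{\alpha\Delta}$, and (c) reduces to $\alpha\Delta<1/2-\eps$ for sufficiently small $\eps>0$, which is exactly $\alpha<1/(2\Delta)$. For a pure-jump self-decomposable process I invoke the polynomial lower bound $\abs{\phi(u)}\gtrsim (1+\abs{u})^{-\alpha\Delta}$ with $\alpha=k(0-)+k(0+)$ taken from the proof of Lemma~2.1 in \cit{T11} (the companion to the upper bound in Lemma~53.9 of \cit{Sa99}), after which the argument is identical to the Gamma case.

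Finally, for (d), if $L=\sum_j L^{(j)}$ is a sum of independent processes of the preceding types then $\nu=\sum_j\nu^{(j)}$ and additivity of the relevant integrals makes \ref{gen}(a) and \ref{gen}(b) immediate. For \ref{gen}(c) the characteristic function factorises, $\phi=\prod_j\phi^{(j)}$, so that $\phi^{-1}=\prod_j(\phi^{(j)})^{-1}$: the compound Poisson factors have inverses uniformly bounded on $\R$ and contribute only a multiplicative constant, whereas the self-decomposable factors combine into a self-decomposable component with $k$-function $\sum_j k^{(j)}$, to which the preceding self-decomposable argument applies. The only genuinely non-trivial ingredient in this whole scheme is the Tankov lower bound on the self-decomposable characteristic function, and this is the step I would treat as a black box; everything else is an inventory of elementary manipulations of L\'evy triplets.
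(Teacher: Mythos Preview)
Your proposal is correct and follows essentially the same approach as the paper, whose argument is distributed through the Examples discussion preceding the proposition rather than collected as a formal proof. Your treatment is in fact more explicit and better organised than the paper's, particularly in part (d), where the paper only remarks that conditions (a) and (b) are stable under superposition and leaves (c) implicit.
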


\subsection{Extensions and perspectives}

There are many directions for further investigation. As from the classical
Donsker result, concrete statistical inference procedures, like L\'{e}vy-analogues of the classical Kolmogorov-Smirnov-tests and
corresponding confidence bands, can be derived from Theorem \ref{Thm1}. Also extensions to uniform CLTs for more
general functionals than just for the distribution function are highly
relevant. A question of particular interest in the area of statistics for
stochastic processes is whether one can allow for high-frequency observation
regimes $\Delta_n \to 0$. As discussed above, decreasing $\Delta \to 0$ renders the inverse
problem more regular, as Assumption \ref{gen}(c) is then easier to satisfy. Since we use the central limit theorem for triangular
arrays in our proofs, allowing $\Delta$ to depend on $n$ should not pose a principal difficulty, but doing so in a sharp way may not only require an estimator based on the second derivative of
$\log(\phi_n)$, but also extra care in controlling all terms uniformly in $n$,
and is beyond the scope of the present paper.

Another issue of statistical relevance is the question of efficiency, which we
briefly address here. Our plug-in estimation method is quite natural and should
have asymptotic optimality properties as the empirical distribution function
has for the classical i.i.d. case. This is also in line with the result by
\cit{KV11} who show that the tangent space of the class of infinitely divisible
distributions with positive Gaussian part is nonparametric to the effect that
the estimation of linear functionals $\int g\,dP$ of $P$ (but not $\nu$ as in
our case) by empirical means is asymptotically efficient. Indeed, a formal
derivation indicates that the pointwise asymptotic variance of our estimator
$\hat N_n(t)$ coincides with the semiparametric Cram\'er-Rao information bound (see \cit{VW96}, Chapter 3.11, for the relevant definitions). Let us
restrict here to the case $t<0$ and assume that the observation law $P_\nu$ has
a Lebesgue density $p_\nu$.

Perturbing the L\'evy measure $\nu$ in direction of an $L^1$-function $h$, we obtain by differentiating in the Fourier domain the score function (the derivative of the log-likelihood)
\[ \dot\ell_\nu(h):=\frac{d}{d\eps} \frac{p_{\nu+\eps h}}{p_\nu}\Big|_{\eps=0}=\frac{{\cal F}^{-1}\Big[\phi_\nu(u)\int (e^{iux}-1)\,h(dx)\Big]}{p_\nu} =\frac{p_\nu\ast(h-\lambda_h\delta_0)}{p_\nu}
\]
with $\lambda_h=\int h$. This yields the Fisher information at measure $\nu$ in direction $h$ as
\[
\scapro{I(\nu)h}{h} :=\E_\nu[\dot\ell_\nu(h)^2]=\int \Big(\frac{p_\nu\ast(h-\lambda_h\delta_0)(x)}{p_\nu(x)}\Big)^2\,P_\nu(dx).
\]
On the other hand, we aim at estimation of the functional $\nu\mapsto N(t)$ whose derivative in direction $h$ by linearity is given by $H(t)=\scapro{{\bf 1}_{(-\infty,t]}}{h}$ (interpreting $\scapro{\cdot}{\cdot}$ as a dual pairing). The semi-parametric Cram\'er-Rao lower bound is then $\sup_{h}\frac{H(t)^2}{\scapro{I(\nu)h}{h}}$, maximising the parametric bound over all sub-models $(\nu+\eps h)_{\eps\in\R}$. The supremum is formally attained at $h^\ast=I(\nu)^{-1}{\bf 1}_{(-\infty,t]}$ with value $\scapro{{\bf 1}_{(-\infty,t]}}{h^\ast}$. The maximiser can be expressed explicitly using the deconvolution operator:
\[ h^\ast={\cal F}^{-1}[\phi^{-1}]\ast \Big\{ p_\nu\times\Big({\cal F}^{-1}[\phi^{-1}(-u)]\ast{\bf
1}_{(-\infty,t]}-{\cal F}^{-1}[\phi^{-1}(-u)]\ast{\bf
1}_{(-\infty,t]}(0)\Big)\Big\}.
\]
Resuming the formal calculus and noting that ${\cal F}^{-1}[\phi^{-1}(-u)]$ is the formal adjoint of  ${\cal F}^{-1}[\phi^{-1}]$, we find the explicit Cram\'er-Rao bound
\[ \int {\bf 1}_{(-\infty,t]}(x)h^\ast(x)\,dx=\int\Big({\cal F}^{-1}[\phi^{-1}(-u)]\ast{\bf 1}_{(-\infty,t]}\Big)(x) p_\nu(x)\Big({\cal F}^{-1}[\phi^{-1}(-u)]\ast{\bf
1}_{(-\infty,t]}\Big)(x)\,dx,
\]
which is exactly equal to the asymptotic variance $\Sigma_{t,t}$ from Theorem
\ref{Thm1}. We have used here that ${\cal F}^{-1}[\phi^{-1}(-u)]\ast{\bf
1}_{(-\infty,t]}(X)$ is centred, cf. \eqref{MainStochTermII} below.

The hardest parametric subproblem of our general semi-parametric estimation
problem is thus given by perturbing $\nu$ in direction of $h^\ast$. The lower
bound for the variance equals exactly the asymptotic variance of our estimator.
Let us nevertheless emphasize that this formal derivation of the Cram\'er-Rao
lower bound does not justify asymptotic efficiency in a completely rigorous
manner: for this one would have to establish the regularity of the statistical
model and $h^\ast\in L^1(\R)$, which appears to require an even finer analysis
of the main terms than our Donsker-type result. The complete proof remains a
challenging open problem.

\section{Proof of Theorem \ref{Thm1}}

The remainder of this article is devoted to the proof of Theorem \ref{Thm1}, which is split into the separate proofs of convergence of the finite-dimensional distributions and of tightness. We shall repeatedly use the following auxiliary lemma.

\begin{lemma}\label{LemmaLevyProp}
Suppose $\gamma=0$. Then Assumption \ref{gen} implies:
\begin{enumerate}
\item The measure $xP=xP(dx)$ has a bounded Lebesgue density on $\R$.
\item $(\phi^{-1})'\in L^2(\R) \cap L^\infty(\R)$ as well as $\abs{\phi^{-1}(u)}\lesssim (1+\abs{u})^{(1-\eps)/2}$ for all $u \in \mathbb R$;
\item $m(u):=\phi^{-1}(-u)(1+iu)^{(-1+\eps)/2}$ is a Fourier multiplier on every Besov space $B^s_{p,q}(\R)$ with $s\in\R$, $p,q\in[1,\infty]$; that is convolution with ${\cal F}^{-1}m$ is continuous from $B^s_{p,q}(\R)$ to $B^s_{p,q}(\R)$.
\end{enumerate}
\end{lemma}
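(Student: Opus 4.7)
My plan is to derive all three parts from the key identity
\[
(\phi^{-1})'(u)=-\Delta\psi'(u)\,\phi^{-1}(u)=-i\Delta\,{\cal F}[x\nu](u)\,\phi^{-1}(u),
\]
which follows from $\phi=e^{\Delta\psi}$ and (with $\gamma=0$) $\psi'(u)=i{\cal F}[x\nu](u)$.

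For part (a), the same computation gives ${\cal F}[xP](u)=-i\phi'(u)=\Delta\,{\cal F}[x\nu](u)\phi(u)=\Delta\,{\cal F}[(x\nu)*P](u)$, so by Fourier inversion $xP=\Delta\,(x\nu)*P$. Since $x\nu$ has a bounded Lebesgue density by Assumption~\ref{gen}(b) and $P$ is a probability measure, the convolution has a bounded density and (a) follows immediately.

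For part (b), combine Assumption~\ref{gen}(b), which gives $|{\cal F}[x\nu](u)|\lesssim(1+|u|)^{-1}$, with Assumption~\ref{gen}(c), which gives $(1+|u|)^{-1+\eps}\phi^{-1}\in L^2$. The key identity then yields
\[
|(\phi^{-1})'(u)|\lesssim (1+|u|)^{-1}|\phi^{-1}(u)| = (1+|u|)^{-\eps}\cdot(1+|u|)^{-1+\eps}|\phi^{-1}(u)|,
\]
whose right-hand side is the product of a bounded factor and an $L^2$ factor, so $(\phi^{-1})'\in L^2$. To get a pointwise bound on $\phi^{-1}$ itself, I would integrate: since $|\phi^{-1}(0)|=1$ and
\[
|\phi^{-1}(u)|^2 = 1 + 2\,\Re\!\int_0^u \overline{\phi^{-1}(v)}\,(\phi^{-1})'(v)\,dv,
\]
Cauchy-Schwarz together with $\|\phi^{-1}\|_{L^2[0,u]}\le(1+|u|)^{1-\eps}\|(1+|v|)^{-1+\eps}\phi^{-1}\|_{L^2(\R)}\lesssim(1+|u|)^{1-\eps}$ and $\|(\phi^{-1})'\|_{L^2}<\infty$ gives $|\phi^{-1}(u)|^2\lesssim 1+(1+|u|)^{1-\eps}$, hence the desired pointwise bound. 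Feeding this back into the key identity yields $|(\phi^{-1})'(u)|\lesssim(1+|u|)^{-(1+\eps)/2}\in L^\infty$. The main obstacle here is the circular flavour of bootstrapping the $L^\infty$ estimate from the $L^2$ one; the Cauchy-Schwarz/FTC argument above is what breaks the circularity.

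For part (c), by part (b) we have $|m(u)|\lesssim(1+|u|)^{(1-\eps)/2}\cdot(1+|u|)^{-1+\eps}\lesssim 1$ (in fact $|1+iu|^{(-1+\eps)/2}\sim(1+|u|)^{(-1+\eps)/2}$ since the exponent is real), while differentiating $m$ and using both pointwise bounds from (b),
\[
|m'(u)|\lesssim |(\phi^{-1})'(-u)|(1+|u|)^{(-1+\eps)/2} + |\phi^{-1}(-u)|(1+|u|)^{(-3+\eps)/2}\lesssim (1+|u|)^{-1}.
\]
Thus $m$ satisfies the one-dimensional Mikhlin-Hörmander condition $(1+|u|)^{|\alpha|}|\partial^\alpha m(u)|\lesssim 1$ for $|\alpha|\le 1$, which is the standard sufficient condition for a Fourier multiplier on all Besov spaces $B^s_{p,q}(\R)$ (see e.g.\ Triebel), yielding (c). The arguments for (a) and (c) are essentially bookkeeping once the pointwise estimates of (b) are in hand, so the genuine work sits in the bootstrap step of part (b).
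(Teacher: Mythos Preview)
Your proof is correct and follows essentially the same route as the paper: the convolution identity $xP=\Delta(x\nu)\ast P$ for (a), the key relation $(\phi^{-1})'=-\Delta\psi'\phi^{-1}$ combined with an FTC/Cauchy--Schwarz bootstrap for (b), and the Mihlin multiplier criterion for (c). The only cosmetic difference is in (b): the paper writes $|\phi^{-1}(u)|\le 1+\int_0^{|u|}|(\phi^{-1})'|$ and applies Cauchy--Schwarz with the weight $(1+|v|)^{\pm\eps}$ on $(\phi^{-1})'$ itself (using that even $(1+|\cdot|)^\eps(\phi^{-1})'\in L^2$), which yields the slightly sharper exponent $1/2-\eps$; you instead square, obtaining $|\phi^{-1}(u)|^2=1+2\Re\int_0^u\overline{\phi^{-1}}(\phi^{-1})'$, and pair the $L^2$ norm of $(\phi^{-1})'$ with the weighted $L^2$ norm of $\phi^{-1}$ from Assumption~\ref{gen}(c), landing exactly on the stated exponent $(1-\eps)/2$. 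Both are valid and equivalent in spirit.
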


\begin{proof}
\mbox{}
\begin{enumerate}
\item From (\ref{EqIdent}) with $\gamma=0$ we see
\[ {\cal F}[ixP](u)=\phi'(u)=i\Delta{\cal F}[x\nu](u){\cal F}P(u) \Rightarrow xP=\Delta (x\nu)\ast P
\]
and thus with $x\nu$ (Assumption \ref{gen}(b)) also $xP$ has a Lebesgue density $xp(x)$ with $\norm{xp}_\infty\le \Delta\norm{x\nu}_\infty$.
\item From Assumption \ref{gen}(b) and $\gamma=0$ we deduce $\abs{\psi'(u)}\lesssim (1+\abs{u})^{-1}$ and thus $\norm{(1+\abs{u})^\eps(\phi^{-1})'}_{L^2}\lesssim \norm{\phi^{-1}(1+\abs{u})^{-1+\eps}}_{L^2}<\infty$ by Assumption \ref{gen}(c). This implies
  \begin{eqnarray*} 
  \abs{\phi^{-1}(u)} &\le& 1+\int_0^u \abs{(\phi^{-1})'(v)}\,dv\lesssim 1+\norm{(1+\abs{v})^\eps(\phi^{-1})'}_{L^2}\norm{(1+\abs{v})^{-\eps}{\bf 1}_{[0,u]}}_{L^2} \\
&\lesssim& (1+|u|)^{(1/2)-\varepsilon} \lesssim (1+\abs{u})^{(1-\eps)/2},
\end{eqnarray*}
 and then also $|(\phi^{-1})'|(u) \lesssim |\phi^{-1}(u)| |\psi'(u)| \lesssim 1$, so $(\phi^{-1})' \in L^\infty(\R)$.
\item The Fourier multiplier property of $m$ follows from  the Mihlin multiplier theorem
for Besov spaces (see e.g. \cit{Tr10} and particularly the scalar version of Cor. 4.11(b) in \cit{GW03}): because of (b) the function $m$ is bounded and satisfies
\[ \abs{u m'(u)}\lesssim \abs{u m(u)}(1+\abs{u})^{-1}\lesssim 1.\]
Consequently, the conditions of Mihlin's multiplier theorem are fulfilled and $m$ is a Fourier multiplier on all Besov spaces $B^s_{p,q}(\mathbb R)$.
\end{enumerate}
\end{proof}

\subsection{Convergence of the Finite-Dimensional Distributions}
Denote by $H^s(\mathbb R), s \in \mathbb R,$ the standard $L^2$-Sobolev spaces
with norm $\|h\|_{H^s}:=\|\mathcal Fh(u) (1+|u|)^s\|_{L^2}$.
\begin{definition} \label{admd}
We say that a function $g\in L^\infty(\R) \cap L^2(\R)$ is {\em admissible} if
\begin{enumerate}
% \item $\abs{{\cal F}g(u)}\lesssim (1+\abs{u})^{-1}$;
\item $g$ is Lipschitz continuous in a neighbourhood of zero,
\item we can split $g=g^c+g^s$ into functions $g^c \in H^1(\mathbb R),\, g^s \in L^1(\mathbb
R)$, satisfying $\max(\abs{{\cal F}[g^s](u)},\abs{{\cal
F}[xg^s](u)})\lesssim (1+\abs{u})^{-1}$ for all $u \in \mathbb R$.
\end{enumerate}
\end{definition}

\begin{lemma} \label{adm}
The functions $g_t$ from \eqref{EqNhat} as well as all finite linear
combinations $\sum_i\alpha_ig_{t_i}$ with $\alpha_i\in\R$, $t_i\not=0,$ are
admissible. Moreover, we can choose $g_t^c, g_t^s$ in such a way that
$$\|g^c_t\|_{H^1}\lesssim (1+|t|)^{-1/2},\quad |{\cal F}g^s_t(u)|
\lesssim (1+|u|)^{-1}(1+|t|)^{-1} ~\text{ and }~ |{\cal F}[xg^s_t](u)|
\lesssim (1+|u|)^{-1},
$$
the inequalities holding with constants independent of $u\in\R$,
$t\in\R\setminus(-\zeta,\zeta)$ for $\zeta>0$ fixed.
\end{lemma}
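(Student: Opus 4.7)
The plan is to construct an explicit splitting of $g_t$ in which $g_t^s$ is a compactly supported ``jump-carrier'' and $g_t^c$ is a continuous $H^1$-regular remainder. Throughout I concentrate on $t\ge \zeta$; the case $t\le -\zeta$ is identical by symmetry, and admissibility of any finite linear combination $\sum_i\alpha_i g_{t_i}$ follows at once from the single-$t$ statement by linearity of the splitting (since $H^1(\R)$ and $L^1(\R)$ are vector spaces, the combination vanishes on $(-\min_i|t_i|,\min_i|t_i|)$ and thus is Lipschitz near the origin, and the Fourier decay estimates add). Observe also that $g_t\in L^\infty\cap L^2$ with $\|g_t\|_\infty\le 1/\zeta$ and $\|g_t\|_{L^2}^2\le 1/\zeta$, and that $g_t\equiv 0$ on $(-\zeta,\zeta)$ makes the Lipschitz condition at $0$ trivial.

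Let $\chi(y):=(1-y)\mathbf{1}_{[0,1]}(y)$; this is of bounded variation, compactly supported in $[0,1]$, and has a single jump of size $1$ at $y=0$. I define
\[ g_t^s(x) := \tfrac{1}{t}\,\chi(x-t), \qquad g_t^c(x) := g_t(x) - g_t^s(x). \]
Then $g_t^s$ is supported in $[t,t+1]$, lies in $L^1(\R)$, and carries precisely the jump $1/t$ of $g_t$ at $x=t$. Computing
\[ \mathcal F g_t^s(u) = \tfrac{1}{t}e^{-iut}\,\mathcal F\chi(u), \qquad \mathcal F[xg_t^s](u) = e^{-iut}\!\int_0^1 (1+y/t)(1-y)e^{-iuy}\,dy, \]
one integration by parts in $y$ (exploiting the jump at $y=0$ and the continuity at $y=1$) yields $|\mathcal F\chi(u)|\lesssim (1+|u|)^{-1}$ and $\bigl|\int_0^1(1+y/t)(1-y)e^{-iuy}\,dy\bigr|\lesssim (1+|u|)^{-1}$, both with constants uniform in $t\ge\zeta$ (for the latter, the derivative of $(1+y/t)(1-y)$ is bounded uniformly in $t$). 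This gives the required estimates $|\mathcal F g_t^s(u)|\lesssim (1+|u|)^{-1}(1+|t|)^{-1}$ and $|\mathcal F[xg_t^s](u)|\lesssim (1+|u|)^{-1}$.

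It remains to check the $H^1$-bound on $g_t^c$. The right-hand limits of $g_t$ and $g_t^s$ at $x=t$ both equal $1/t$, and both vanish for $x<t$, so the jumps cancel and $g_t^c$ is continuous on $\R$. It vanishes on $(-\infty,t]$, is bounded by $2/t$ on $[t,t+1]$, and equals $1/x$ on $[t+1,\infty)$, giving $\|g_t^c\|_{L^2}^2\lesssim 1/t$. Its weak derivative equals $-x^{-2}+1/t$ on $(t,t+1)$ and $-x^{-2}$ on $(t+1,\infty)$, and both terms are of order $1/t$, so $\|(g_t^c)'\|_{L^2}^2\lesssim 1/t^2$. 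Hence $\|g_t^c\|_{H^1}^2\lesssim 1/t + 1/t^2\lesssim (1+|t|)^{-1}$, as required.

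There is no serious obstacle once the splitting is guessed; the only care needed is to track the $t$-dependence of the constants in the integration-by-parts and the $L^2$-norm computations so that the bounds are uniform in $t\in[\zeta,\infty)$ with the claimed powers of $(1+|t|)$.
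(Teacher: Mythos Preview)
Your proof is correct. Your splitting is genuinely different from the paper's: the paper takes a \emph{multiplicative} decomposition $g_t^s(x)=g_t(x)\chi(x-t)$, $g_t^c(x)=g_t(x)(1-\chi(x-t))$ with the specific cutoff $\chi(x)=e^{x}\mathbf{1}_{(-\infty,0]}(x)$, whereas you take an \emph{additive} one in which $g_t^s$ is a freestanding piecewise-linear ramp $\tfrac{1}{t}\chi(\cdot-t)$ carrying the jump and nothing else. Your version is arguably more elementary---the Fourier and $H^1$ computations reduce to polynomials on $[0,1]$---and it verifies the lemma cleanly. The paper's choice, however, is made with downstream use in mind: in the tightness argument for the ``critical term'' (Section~\ref{SecCritical}) the exponential cutoff yields $ixg_t^s(x)=ie^{x-t}\mathbf{1}_{(-\infty,t]}(x)$, so that ${\cal F}[ixg_t^s](u)=e^{-iut}(1+iu)^{-1}$ factors as a pure phase times a fixed function, and the critical term becomes a \emph{translation family} $q(\cdot-t)$ of a single function $q$. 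Your $xg_t^s$ carries a residual $t$-dependence (the $y/t$ term in your integrand) that would spoil this factorisation, so if you were to continue into the tightness proof you would need either a slightly more involved argument or to switch to the paper's splitting at that point.
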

\begin{proof}
First note that all properties of admissible functions remain invariant under
finite linear combinations and reflection $g\mapsto g(-\cdot)$. It thus
suffices to check that $g_t$, $t<0$, is admissible. Let $\chi\in
C^\infty((-\infty,0])$ be a smooth function with $\chi(0)=1$ and $\chi,
\chi'$ both bounded and integrable on $(-\infty, 0]$, for instance $\chi(x)=e^x1_{(-\infty,
0]}$. Decompose $g_t=g_t^c+g_t^s$ with $$g_t^c(x)=g_t(x)(1-\chi(x-t)), ~~
g_t^s(x)=g_t(x)\chi(x-t); ~~~\text{for }x\le t,$$ and both equal to zero for
$x>t$. Then $g_t^c \in L^2(\mathbb R)$ and its (weak) derivative is
$$(g_t^c)'(x) = -x^{-2}(1-\chi(x-t))1_{(-\infty, t]}(x) +
x^{-1}(1-\chi(x-t))'1_{(-\infty, t]}(x) \in L^2(\mathbb R),$$ so $g_t^c\in
H^1(\mathbb R)$. The functions $g_t^s, xg_t^s$ are both integrable since $\chi$
is. The (weak) derivatives of $xg_t^s$ and $g_t^s$ are $\chi'(x-t)1_{(-\infty,
t)} -\delta_t$ and $-x^{-2}\chi(x-t)1_{(-\infty, t]} + x^{-1}
\chi'(x-t)1_{(-\infty, t)} - t^{-1}\delta_t$, respectively, with point measures
$\delta_t$. So, both functions are of bounded variation and their Fourier
transforms are bounded by $(1+\abs{u})^{-1}$ up to multiplicative constants.
Finally, observe that $g_t$ is constant and thus Lipschitz near zero, so that
$g_t$ is admissible.

For the second claim we again only consider $t<0$ and first observe, $\chi$
being bounded, that  $$\|g^c_t\|^2_{L^2} \lesssim \int_{-\infty}^t |x|^{-2} \sim
|t|^{-1}$$ as $t \to -\infty$. Likewise, using the explicit form of $(g_t^c)'$,
we see $$\|g^c_t\|_{H^1} \lesssim \|g^c_t\|_{L^2}+\|(g_t^c)'\|_{L^2} \lesssim
(1+|t|)^{-1/2}.$$ For $g_t^s=x^{-1}1_{(-\infty, t]} \chi(x-t)$ we see
$\|g^s_t\|_{L^1} \le t^{-1} \|\chi\|_{L^1}$, and the total variation of the
derivative of $g_t^s$ is bounded by $t^{-2}\|\chi\|_{L^1}+t^{-1}\|\chi'\|_{L^1}
+ t^{-1}$.  We conclude that $\abs{{\cal F}g_t^s(u)}\lesssim
(1+\abs{u})^{-1}(1+\abs{t})^{-1}$ holds. The same argument gives a bound
independent of $t$ for $|{\cal F}[xg^s_t](u)|$, thus completing the proof.
\end{proof}

\begin{theorem}\label{ThmCLT}
Suppose Assumption \ref{gen} is satisfied, $g$ is admissible and $h_n\thicksim n^{-1/2}(\log n)^{-\rho}$ for some $\rho>1$. Then setting
\[ \hat N_n(g):=\frac{1}{i\Delta}\int_{\R}g(x){\cal F}^{-1}[(\phi_n'/\phi_n) {\cal F}K_{h_n}](x)\,dx,\quad N(g):=\int g(x)x\nu(dx)
\]
(with some abuse of notation $N(t)=N(g_t)$ etc.), we have asymptotic normality,
\[\sqrt n \Big(\hat N_n(g)-N(g)\Big) \to^{\mathcal L}N(0,\sigma_g^2) \]
as $n \to \infty$ with finite variance
\[
\sigma_g^2 = (i\Delta)^{-2}\int_{\R}\Big({\cal F}^{-1}
[{\cal F}g(u)\phi^{-1}(-u)](x)ix+{\cal F}^{-1}[{\cal F}g(u)(\phi^{-1})'(-u)](x)\Big)^2\,P(dx).
\]
\end{theorem}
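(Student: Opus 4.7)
The plan is to use Plancherel to rewrite $\sqrt{n}(\hat N_n(g) - N(g))$ as a spectral-domain integral. Recalling $\phi'/(i\Delta\phi) = \gamma + {\cal F}[x\nu]$ from \eqref{EqIdent}, and assuming $\gamma = 0$ (the drift contributes only a point mass at the origin to ${\cal F}^{-1}[\phi'/\phi]$, outside the support of any admissible $g$), one has
\[
\sqrt{n}(\hat N_n(g) - N(g)) = \frac{\sqrt n}{2\pi i\Delta}\int {\cal F}g(-u)\left[\frac{\phi_n'(u)}{\phi_n(u)}{\cal F}K_{h_n}(u) - \frac{\phi'(u)}{\phi(u)}\right]du.
\]
Linearise the ratio via
\[
\frac{\phi_n'}{\phi_n} - \frac{\phi'}{\phi} = \frac{\phi_n'-\phi'}{\phi} - \frac{\phi'(\phi_n-\phi)}{\phi^2} + Q_n,
\]
with $Q_n$ quadratic in $\phi_n-\phi$ and $\phi_n'-\phi'$, and decompose $\sqrt n(\hat N_n(g)-N(g)) = T_n + B_n + R_n$ into the main linear stochastic term $T_n$, the smoothing bias $B_n$ (carrying the factor ${\cal F}K_{h_n}-1$), and the quadratic remainder $R_n$.

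Exploiting the cancellation $-\phi'/\phi^2 = (\phi^{-1})'$ and plugging in the i.i.d.~expansions of $\phi_n,\phi_n'$, a change of variable $u\mapsto -u$ shows $T_n = n^{-1/2}\sum_{k=1}^n Z_{n,k}$ with $Z_{n,k} = (i\Delta)^{-1}h_{g,h_n}(X_k)$, where $h_{g,h_n}$ is the ${\cal F}K_{h_n}$-smoothed analogue of $h_g$ from \eqref{gausscov}. A direct computation using $\E[iXe^{iuX}]=\phi'(u)$ gives $\E[Z_{n,k}]=0$. The core of the proof is then the Lyapunov central limit theorem for triangular arrays applied to $T_n$: first, variance convergence $\E[Z_{n,k}^2]\to\sigma_g^2$ follows from $L^2(P)$-convergence $h_{g,h_n}\to h_g$, itself obtained by dominated convergence together with Plancherel, the boundedness of $xp(x)$ (Lemma \ref{LemmaLevyProp}(a)), and $(\phi^{-1})'\in L^2\cap L^\infty$ (Lemma \ref{LemmaLevyProp}(b)); second, the Lyapunov condition reduces to a uniform-in-$n$ moment bound $\sup_n\E[\abs{h_{g,h_n}(X)}^{2+\delta}]<\infty$ for some $\delta>0$.

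\textbf{The main obstacle} is this last moment bound, where the low regularity of the admissible indicator-type function $g$ must be reconciled with the fractional-order pseudo-differential operator ${\cal F}^{-1}[\phi^{-1}(-\cdot)]$. The admissible splitting $g=g^c+g^s$ from Definition \ref{admd} and Lemma \ref{adm} is tailored precisely for this: the Mihlin multiplier property of $\phi^{-1}(-u)(1+iu)^{(-1+\eps)/2}$ (Lemma \ref{LemmaLevyProp}(c)) applied to the Sobolev-regular part $g^c\in H^1$ gives control of ${\cal F}^{-1}[\phi^{-1}(-u){\cal F}g^c(u)]$ in $H^{\eps/2}(\R)$, while the $L^1$-singular part $g^s$ is handled using the direct bounds $\abs{{\cal F}g^s(u)}, \abs{{\cal F}[xg^s](u)}\lesssim(1+\abs{u})^{-1}$; combined with the bounded density of $xP$ (Lemma \ref{LemmaLevyProp}(a)) and the $L^{2+\eps}(P)$-moment of $X_k$ (Assumption \ref{gen}(a)), this yields the required bound for both the $ixA(x)$-summand and the $(\phi^{-1})'$-summand in $h_{g,h_n}$, the latter existing in $L^2(\R)$ precisely because of the pseudo-locality enforced by Assumption \ref{gen}(b). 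Finally, the bias $B_n\to 0$ since the decay $\abs{{\cal F}[x\nu](u)}\lesssim(1+\abs{u})^{-1}$ combined with the Sobolev/$L^1$ decay of ${\cal F}g^c,{\cal F}g^s$ makes the high-frequency tail beyond $\abs{u}\sim 1/h_n$ vanish faster than $n^{-1/2}$ under $h_n\sim n^{-1/2}(\log n)^{-\rho}$, $\rho>1$; the quadratic remainder $R_n\to_P 0$ on the high-probability event $\inf_{\abs{u}\le 1/h_n}\abs{\phi_n(u)}\ge\abs{\phi(u)}/2$, using $\sup_{\abs{u}\le 1/h_n}\abs{\phi_n(u)-\phi(u)}=O_P(n^{-1/2}\sqrt{\log n})$ together with the polynomial lower bound $\abs{\phi(u)}\gtrsim(1+\abs{u})^{-(1-\eps)/2}$ from Lemma \ref{LemmaLevyProp}(b).
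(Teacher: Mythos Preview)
Your proposal is correct and follows essentially the same approach as the paper: the decomposition into drift term, smoothing bias, linearised stochastic term, and quadratic remainder; the Lyapunov CLT for triangular arrays applied to the linear term, with the key uniform $(2+\eps)$-moment bound obtained via the admissible splitting $g=g^c+g^s$; and the disposal of bias and remainder via the decay of ${\cal F}[x\nu]$ and the uniform control of $\phi_n-\phi$ on $[-h_n^{-1},h_n^{-1}]$, respectively. One minor imprecision: your claim that the drift point mass at the origin lies ``outside the support of any admissible $g$'' is not correct in general---admissibility only requires $g$ to be Lipschitz near zero, not to vanish there---and the paper instead uses precisely this Lipschitz property to bound the drift-induced error $\int (g(x)-g(0))K_h(x)\,dx$ by $O(h)=o(n^{-1/2})$.
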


\begin{corollary}\label{CorFidi}
Under the assumptions of the preceding theorem the finite-dimensional distributions of the processes $(\sqrt{n}(\hat N_n(t)-N(t)),t\in\R\setminus\{0\})$ converge to $\mathbb G^\varphi$ as $n \to \infty$, where $\mathbb G^\varphi$ is a centered Gaussian process, indexed by $\R\setminus\{0\}$, with covariance structure given by \eqref{gausscov} for $t,s\in\R\setminus\{0\}$.
\end{corollary}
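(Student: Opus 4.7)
The plan is to deduce the corollary from Theorem \ref{ThmCLT} via the Cramér--Wold device, exploiting the linearity of the map $g \mapsto \hat N_n(g)$. Fix a finite collection $t_1, \ldots, t_k \in \R \setminus \{0\}$ and arbitrary coefficients $\alpha_1, \ldots, \alpha_k \in \R$. Since both $\hat N_n$ and $N$ are linear functionals of $g$, setting $g := \sum_{i=1}^k \alpha_i g_{t_i}$ one has
\[
\sum_{i=1}^k \alpha_i \sqrt n \bigl(\hat N_n(t_i) - N(t_i)\bigr) = \sqrt n \bigl(\hat N_n(g) - N(g)\bigr).
\]
By Lemma \ref{adm} the function $g$ is admissible, so Theorem \ref{ThmCLT} applies and gives convergence in law to a centred normal with variance $\sigma_g^2$.

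The remaining step is to verify that $\sigma_g^2 = \sum_{i,j} \alpha_i \alpha_j \Sigma_{t_i, t_j}$ with $\Sigma$ defined as in \eqref{gausscov}, so that Cramér--Wold identifies the limit with a centred Gaussian vector having the required covariance. Introducing
\[
h_g(x) := {\cal F}^{-1}\bigl[{\cal F}g(u)\phi^{-1}(-u)\bigr](x)\, ix + {\cal F}^{-1}\bigl[{\cal F}g(u)(\phi^{-1})'(-u)\bigr](x),
\]
the variance in Theorem \ref{ThmCLT} reads $\sigma_g^2 = (i\Delta)^{-2} \int h_g(x)^2 \,P(dx)$. Linearity of the Fourier transform and of differentiation gives $h_g = \sum_i \alpha_i h_{t_i}$ in $L^2(P)$ (where $h_{t_i}$ are the functions from \eqref{gausscov}; their membership in $L^2(P)$ is ensured by the discussion surrounding (\ref{htrig}) under Assumption \ref{gen}). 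Expanding the square and integrating against $P$ then yields
\[
\sigma_g^2 = \sum_{i,j=1}^k \alpha_i \alpha_j\, (i\Delta)^{-2}\! \int h_{t_i}(x) h_{t_j}(x)\, P(dx) = \sum_{i,j=1}^k \alpha_i \alpha_j\, \Sigma_{t_i,t_j},
\]
as required.

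As a by-product, the matrix $(\Sigma_{t_i,t_j})_{i,j}$ is a Gram matrix in $L^2(P)$ and thus positive semidefinite, so a centred Gaussian process $\mathbb G^\varphi$ on $\R \setminus \{0\}$ with covariance $\Sigma$ exists by Kolmogorov's extension theorem; the Cramér--Wold device now delivers finite-dimensional convergence of $\sqrt n (\hat N_n - N)$ to $\mathbb G^\varphi$. I expect no serious obstacle in this argument: the analytic content has been encapsulated in Theorem \ref{ThmCLT} and the admissibility of finite linear combinations $\sum \alpha_i g_{t_i}$ has already been verified in Lemma \ref{adm}. The only point requiring mild care is ensuring that $h_{t_i}$ is rigorously defined in $L^2(P)$ so that the algebraic expansion above is legitimate, but this is precisely what the representation \eqref{gausscov} was designed to provide.
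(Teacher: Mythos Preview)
Your proposal is correct and follows exactly the paper's approach: the paper's proof consists of a single sentence invoking the Cram\'er--Wold device together with Lemma \ref{adm} (admissibility of $\sum_i \alpha_i g_{t_i}$) and Theorem \ref{ThmCLT}. You have simply spelled out the variance identification $\sigma_g^2 = \sum_{i,j}\alpha_i\alpha_j\Sigma_{t_i,t_j}$ and the existence of $\mathbb G^\varphi$ in more detail than the paper does, but the argument is the same.
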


\begin{proof}
This follows directly by the Cram\'er-Wold device applied to any finite subfamily of $(g_t,\,t\in\R\setminus\{0\})$, using the preceding lemma and theorem.
\end{proof}

The remaining part of this subsection is devoted to the proof of Theorem \ref{ThmCLT}.

\subsubsection{Discarding the drift $\gamma$}\label{SecDrift}

We shall show that we may assume $\gamma=0$ in the sequel. To see this, observe that shifting $X_k\mapsto \tilde X_k=X_k+\gamma$ leads to the shift in the empirical quotient
\[\phi_n'(u)/\phi_n(u)\mapsto \tilde\phi_n'(u)/\tilde\phi_n(u)=(e^{iu\gamma}\phi_n)'(u)/(e^{iu\gamma}\phi_n(u))=i\gamma+\phi_n'(u)/\phi_n(u) \]
and the true quotient also satisfies $\tilde\phi'(u)/\tilde\phi(u)=i\gamma+\phi'(u)/\phi(u)$. In $\hat N_n(g)-N(g)$ this shift thus induces the error
\begin{align*}
\babs{\frac{1}{i\Delta}\int_{\R}g(x){\cal F}^{-1}[i\gamma({\cal F}K_h-1)](x)\,dx}
&=\frac{|\gamma|}{\Delta}\babs{\int_{\R}(g(x)-g(0))K_h(x)\,dx}\\
&\lesssim \int_{\R} \norm{g}_{\text{Lip}(0)}\abs{x} \abs{K_h(x)}\,  dx +  \int_{[-\delta, \delta]^c} \|g\|_\infty|K_h(x)|dx\\
&\lesssim \int_{\R} \abs{x} h^{-1}(1\wedge\abs{x/h}^{-\beta})\,dx + \int_{[-\delta/h, \delta/h]}(1+|u|)^{-\beta}du \lesssim h,
\end{align*}
where we have used the Lipschitz constant of $g$ in a $\delta$-neighbourhood of
zero and \eqref{EqKProp} with $\beta>2$. By the choice of $h=h_n$ this error is
of order $O(h_n)=o(n^{-1/2})$ and thus negligible in the asymptotic distribution of $\sqrt n (\hat N(g) -N(g))$, and we note that this bound is uniform in
all $g$ satisfying the admissibility conditions with uniform constants.
Henceforth, without loss of generality, we shall only consider the case
$\gamma=0$.

\subsubsection{Approximation error}

%Here and the following we shall need the following identity for sufficiently regular functions or measures $a,b,c$ (Fubini!):
%\[ \int a(b\ast c)=\int\int a(x)b(x-y)c(y)\,dy\,dx=\int (a\ast \check b)c\text{ with } \check b(x):=b(-x).
%\]

By approximation error we understand here the deterministic 'bias' term
$$\frac{1}{2\pi i\Delta}\int_{\R}{\cal F}g(-u)\frac{\phi'(u)}{\phi(u)}
{\cal F}K_h(u)du - \frac{1}{2\pi i\Delta}\int_{\R}{\cal
F}g(-u)\frac{\phi'(u)}{\phi(u)}du$$ induced by the spectral cutoff with
${\cal F}K_h$. We use Assumption \ref{gen}(b), i.e. that
$\abs{\psi'(u)}=\abs{{\cal F}[x\nu](u)} \lesssim (1+\abs{u})^{-1}$. Moreover,
we split $g=g^c+g^s$ and treat the bias of each term separately.

For the term involving $g^s$, using the Lipschitz continuity and boundedness of
${\cal F}K$ (due to \eqref{EqKProp} with $\beta>2$),
\begin{align*}
\frac{1}{2\pi \Delta}\babs{\int_{\R}{\cal F}g^s(-u)\frac{\phi'(u)}{\phi(u)} (1-{\cal F}K_h)(u)\,du}
&\lesssim \int_{\R}(1+\abs{u})^{-1}\abs{\psi'(u)}\abs{1-{\cal F}K(hu)}\,du\\
&\lesssim \int_{\R}(1+\abs{u})^{-2}\min(h\abs{u},1)\,du\\
&\lesssim h\log(h^{-1}).
\end{align*}
For $g^c$ we have by the Cauchy-Schwarz inequality
\begin{align*}
\frac{1}{2\pi \Delta}\babs{\int_{\R}{\cal F}g^c(-u)\frac{\phi'(u)}{\phi(u)} (1-{\cal F}K_h)(u)\,du}
&\lesssim \int_{\R}(1+|u|)|{\cal F}g^c(-u)|(1+|u|)^{-2}h|u|\,du \\
&\lesssim h\|g^c\|_{H^1} \Big(\int_\mathbb R (1+|u|)^{-2}du\Big)^{1/2} \thicksim h
\end{align*}
Combining these two estimates, and since $h=h_n=o(n^{-1/2}\log(n)^{-1})$, we
conclude that the bias term is of negligible order $o(n^{-1/2})$ in the asymptotic distribution of $\sqrt n (\hat N(g)- N(g))$.

\subsubsection{Main stochastic term}\label{SecStochTerm}

Linearising the error in the quotient $\phi_n'/\phi_n$ we identify two major stochastic terms:
\[ \frac{\phi_n'(u)}{\phi_n(u)}-\frac{\phi'(u)}{\phi(u)} =\phi^{-1}(u)(\phi_n'-\phi')(u)+(\phi^{-1})'(u)(\phi_n-\phi)(u) + R_n(u)
\]
with remainder
\begin{equation}\label{EqRemainder}
 R_n(u):=\Big(1-\frac{\phi_n(u)}{\phi(u)}\Big) \Big(\frac{\phi_n'(u)}{\phi_n(u)}-\frac{\phi'(u)}{\phi(u)}\Big)
\end{equation}
where we used the identity $\phi^{-1}\phi'+(\phi^{-1})'\phi=(\phi^{-1}\phi)'=0$. Discarding the remainder term for the time being, we study the linear centered term
\begin{align}
&\frac{1}{2\pi i\Delta}\int_{\R}{\cal F}g(-u){\cal F}K_h(u)\Big(\phi^{-1}(u)(\phi_n'-\phi')(u)+(\phi^{-1})'(u)(\phi_n-\phi)(u)\Big) \,du\nonumber \\
&=\frac{1}{2\pi i\Delta}\int_{\R}{\cal F}g(-u){\cal F}K_h(u)\Big(\phi^{-1}(u)\phi_n'(u)+(\phi^{-1})'(u)\phi_n(u)\Big) \,du \nonumber\\
&=\frac{1}{2\pi i\Delta}\int_{\R}{\cal F}g(-u){\cal F}K_h(u)\Big(\phi^{-1}(u){\cal F}[ixP_n](u)+(\phi^{-1})'(u){\cal F}[P_n](u)\Big) \,du \nonumber\\
&=\frac{1}{i\Delta}\int_{\R}\Big({\cal F}^{-1}
\Big[\phi^{-1}(-u){\cal F}g(u){\cal F}K_h(-u)\Big](x)ix+{\cal F}^{-1}\Big[(\phi^{-1})'(-u){\cal F}g(u){\cal F}K_h(-u)\Big](x)\Big) \,P_n(dx). \label{MainStochTermII}
\end{align}
These manipulations are justified by standard Fourier analysis of finite
measures, using the compact support of $\mathcal F K_h$ and of $P_n$ as well as
that $(1+\abs{u})^{-1}\phi^{-1}(u), {\cal F}g, (\phi^{-1})'$ are all in
$L^2(\R)$ (by virtue of Assumption \ref{gen}(c), admissibility of $g$, Lemma
\ref{LemmaLevyProp}(b)).

Thus, the central limit theorem for triangular arrays under Lyapounov's
condition (e.g. Theorem 28.3 combined with (28.8) in \cit{Ba96}) applies to the
standardised sums if
 \begin{equation}\label{MainCond}
 \sup_{h\in(0,1)} \int_{\R}\babs{{\cal F}^{-1}
\Big[\phi^{-1}(-u){\cal F}g(u){\cal F}K_h(-u)\Big](x)ix+{\cal F}^{-1}\Big[(\phi^{-1})'(-u){\cal F}g(u){\cal F}K_h(-u)\Big](x)}^{2+\eps} P(dx)
\end{equation}
is finite.

We use the decomposition $g=g^c+g^s$ and deal with $g^c$ first. We have from the Cauchy-Schwarz inequality, Assumption \ref{gen}(c) and admissibility of $g$
\begin{equation} \label{cterm}
\int_\mathbb R |\mathcal F[g^c](u)| |\phi^{-1}(-u)|du \le \|g^c\|_{H^1} \|\mathcal F^{-1}[\phi^{-1}]\|_{H^{-1}} < \infty.
\end{equation}
Since also $\sup_{h>0,u}\abs{{\cal F}K_h(u)}\le \|K\|_{L^1}<\infty$ we have $\mathcal F[g^c] \phi^{-1}(-\cdot){\cal F}K_h \in L^1(\mathbb R)$  and thus
\[ \sup_{h\in(0,1)}{\cal F}^{-1}
[\phi^{-1}(-u){\cal F}g^c(u){\cal F}K_h(-u)]\in L^\infty(\R).
\]
The integral over the first term in \eqref{MainCond} with $g^c$ replacing $g$ is thus finite in view of $\int \abs{x}^{2+\eps}P(dx)<\infty$ by Assumption \ref{gen}(a).

For the singular part we remark $\abs{({\cal F}K_h)'(u)}\le\norm{xK_h}_{L^1}\lesssim h$ as well as (by Assumption \ref{gen}(b)) $\abs{(\phi^{-1})'(u)}=\Delta\abs{\psi'(u)\phi^{-1}(u)}\lesssim (1+\abs{u})^{-1}\abs{\phi^{-1}(u)}$.
We conclude uniformly in $h$, using admissibility of $g$,
\[ \abs{(\phi^{-1}(-\cdot){\cal F}g^s{\cal F}K_h(-\cdot))'(u)}\lesssim \abs{\phi^{-1}(u)}(1+\abs{u})^{-1}.
\]
By Assumption \ref{gen}(c) and the Sobolev embedding this implies
\begin{equation}\label{EqIneqgs}
\sup_h {\cal F}^{-1}
[\phi^{-1}(-u) {\cal F}g^s(u){\cal F}K_h(-u)](x)(1+ix)\in H^\eps(\R)\subset L^{2+\eps}(\R).
\end{equation}
Using Lemma \ref{LemmaLevyProp}(a) and $\abs{x}^{2+\eps}\le \abs{x}\abs{1+ix}^{2+\eps}$, also the integral over the first term in \eqref{MainCond} with $g^s$ replacing $g$ is finite.

For the integral over the second term in \eqref{MainCond} we recall
$\sup_{h>0,u}\abs{{\cal F}K_h(u)}\le \|K\|_{L^1}<\infty$ and that $\mathcal Fg,
(\phi^{-1})'$ are both in $L^2(\mathbb R)$ to deduce $\abs{{\cal F}g(u){\cal
F}K_h(-u)(\phi^{-1})'(-u)} \in L^1(\mathbb R)$ by the Cauchy-Schwarz
inequality. By Fourier inversion ${\cal F}^{-1}[{\cal F}g(u){\cal
F}K_h(-u)(\phi^{-1})'(-u)] \in L^\infty$ holds, and since $P$ is a probability
measure, also the integral over the second term is finite.

Altogether we have shown that under our conditions the main stochastic error
term is asymptotically normal with rate $1/\sqrt{n}$ and mean zero. For
$n\to\infty$ the variances converge to $\sigma_g^2$, which follows from ${\cal
F}K_{h_n}\to 1$ pointwise  and uniform integrability by bounded
($2+\eps$)-moments.

\subsubsection{Remainder term}\label{SecRemainder}

In what follows $\Pr$ stands for the usual product probability measure $P^\mathbb N$ describing the joint law of $X_1, X_2, \dots$, and $Z_n=O_P(r_n)$ means that $r_n^{-1}Z_n$ is bounded in $\Pr$-probability. We show that the remainder term is $O_P(r_n)$ for some $r_n=o(n^{-1/2})$, and therefore negligible in the asymptotic distribution of $\sqrt n (\hat N(g) -N(g))$. 

From Theorem 4.1 of \cit{NR09} we have for any $\delta>0$, using the finite
$(2+\eps)$-moment property of $P$ from \eqref{EqMoments},
\[\sup_{\abs{u}\le U}\Big(\abs{\phi_n(u)-\phi(u)}+\abs{\phi_n'(u)-\phi'(u)}\Big)=O_P(n^{-1/2}(\log U)^{1/2+\delta}).
\]
This implies in particular, using
\begin{equation} \label{lbd}
\inf_{|u| \le h_n^{-1}}|\phi(u)| \gtrsim \inf_{|u| \le h_n^{-1}}(1+|u|)^{-1/2} \gtrsim  \sqrt h_n \gtrsim n^{-1/4} (\log n)^{-\rho/2}
\end{equation}
from Lemma \ref{LemmaLevyProp}(b), that for any constant $0<\kappa<1$,
\begin{eqnarray*}
&&\Pr \left(\left|\frac{1}{\phi_n(u)}\right| < \left|\frac{\kappa}{\phi(u)}\right|~\text{for some } u \in [-h_n^{-1}, h_n^{-1}]
\right) \\
&& = \Pr \left(\left|\frac{\phi_n(u)}{\phi(u)} \right| > \kappa^{-1}~\text{for some } u \in [-h_n^{-1}, h_n^{-1}] \right) \\
&&\le \Pr \left(\left|\frac{\phi_n(u)-\phi(u)}{\phi(u)} \right| > (\kappa^{-1}-1)~\text{for some }  u \in [-h_n^{-1}, h_n^{-1}] \right) \\
&& \le \Pr \left(\sup_{|u| \le h_n^{-1}}\left|\phi_n(u)-\phi(u) \right| \gtrsim n^{-1/4} (\log n)^{-\rho/2} \right) \to 0
\end{eqnarray*}
as $n \to \infty$, in other words, on events of probability approaching one,
$\phi_n^{-1}$ decays no faster than $\phi^{-1}$ uniformly on increasing sets
$[-h_n^{-1}, h_n^{-1}]$.

Now to control the remainder term \eqref{EqRemainder} we use $\supp({\cal F}K_h)\subset[-h^{-1},h^{-1}]$ and distinguish each term of the decomposition $g=g^s+g^c$. First, using $\abs{{\cal F}g^s(u)}\lesssim (1+\abs{u})^{-1}$, Lemma \ref{LemmaLevyProp}(b) and Assumption \ref{gen}(c) we see
\begin{align*}
&\babs{\int_{-h^{-1}}^{h^{-1}} {\cal F}g^s(-u){\cal F}K_h(u)R_n(u)\,du}\\
&= O_P\Big( \int_{-h^{-1}}^{h^{-1}} (1+\abs{u})^{-1}n^{-1}(\log h^{-1})^{1+2\delta} \abs{\phi^{-1}(u)} \big(\abs{\phi(u)^{-1}}+\abs{(\phi^{-1})'(u)}\big)\,du\Big)\\
&=O_P\Big(n^{-1}(\log h^{-1})^{1+2\delta}h^{2\eps-1}\int (1+\abs{u})^{-2+2\eps}\abs{\phi(u)}^{-2}du\Big)\\
&=O_P\big(n^{-1}(\log h^{-1})^{1+2\delta}h^{2\eps-1}\big).
\end{align*}
For the nonsingular part we have likewise, using the Cauchy-Schwarz inequality, $g^c \in {H^1}(\mathbb R)$, (\ref{lbd}) and Assumption \ref{gen}(c),
\begin{align*}
\babs{\int_{-h^{-1}}^{h^{-1}} {\cal F}g^c(-u){\cal F}K_h(u)R_n(u)\,du}
&= O_P\Big( n^{-1}(\log h^{-1})^{1+2\delta} \Big(\int_{-h^{-1}}^{h^{-1}} (1+\abs{u})^{-2} \abs{\phi(u)}^{-4}\,du\Big)^{1/2}
\Big) \\
&= O_P\Big( n^{-1}(\log h^{-1})^{1+2\delta} h^{-1/2} \|\phi^{-1} (1+|u|)^{-1}\|_{L^2}\Big).
\end{align*}
Consequently, the remainder term is negligible because $h_n^{-1+2\eps}(\log h_n^{-1})^{1+2\delta}=o(n^{1/2})$. Note that this gives in fact uniform $o_P(n^{-1/2})$-control of the remainder term for all $g$ that satisfy the admissibility bounds uniformly.

\subsection{Tightness of the Linear Term}

We study the linear part (\ref{MainStochTermII}) and introduce the empirical process
\begin{eqnarray}\label{EqNun}
\nu_n^\phi(t)&:=& \sqrt n\frac{1}{i\Delta}\int_{\R}\Big({\cal F}^{-1}
\Big[\phi^{-1}(-u){\cal F}g_t(u){\cal F}K_{h_n}(-u)\Big](x)ix +  \\
&& ~~~~~~~~~~~~~~~ {\cal F}^{-1}\Big[(\phi^{-1})'(-u){\cal F}g_t(u){\cal F}K_{h_n}(-u)\Big](x)\Big) (P_n-P)(dx),\quad \abs{t}\ge\zeta>0.
\nonumber \end{eqnarray}
Recall that this process is centered even without subtracting $P$. Moreover, since $\sup_{|t|\ge \zeta} \|g_t\|_{L^2}<\infty$, the arguments after (\ref{MainStochTermII}) imply that $\nu_n^\phi$ is a (possibly non-measurable) random element of the space $\ell^\infty((-\zeta, \zeta)^c)$ of bounded functions on $(-\infty, -\zeta] \cup [\zeta, \infty)$ (the complement of $(-\zeta, \zeta)$ in $\mathbb R$) equipped with the uniform norm $\|\cdot\|_{(-\zeta, \zeta)^c}$.

\subsubsection{Pregaussian limit process}

Theorem \ref{Thm1} will follow if we show that $\nu_n^\varphi$ converges to $\mathbb G^\varphi$ in law in $\ell^\infty((-\zeta, \zeta)^c)$. For this statement to make sense we have to show first that $\mathbb G^\varphi$ defines a proper Borel random variable in $\ell^\infty((-\zeta, \zeta)^c)$, which is implied by the following more general result. Recall that any Gaussian process $\{\mathbb G(t)\}_{t \in T}$ induces its intrinsic covariance metric $d^2(s,t)=E(\mathbb G(s)- \mathbb G(t))^2$ on the index set $T$.

\begin{theorem} \label{scgauss}
Grant Assumption $\ref{gen}$. The Gaussian process $\{\mathbb G^\phi (t)\}_{t: |t| \ge \zeta}$ with covariance given by \eqref{gausscov} admits a version, still denoted by $\mathbb G^\phi$, which has uniformly continuous sample paths almost surely for the intrinsic covariance metric of $\mathbb G^\phi$, and which satisfies $\sup_{t:|t|\ge \zeta}|\mathbb G^\phi(t)|<\infty$ almost surely.
\end{theorem}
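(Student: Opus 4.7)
The strategy is to verify Dudley's sample continuity criterion for centered Gaussian processes (see, e.g., \cit{VW96}, Section~2.3, or \cit{D99}, Theorem~2.3.7): the process admits a version with almost surely bounded and $d$-uniformly continuous paths, automatically Borel in $\ell^\infty(T)$ with $T:=\{t\in\R:|t|\ge\zeta\}$, as soon as $(T,d)$ with $d(s,t):=(\E(\mathbb G^\phi(s)-\mathbb G^\phi(t))^2)^{1/2}$ is totally bounded and Dudley's entropy integral $\int_0^\infty\sqrt{\log N(\eps,T,d)}\,d\eps$ converges.

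The integration-by-parts computation underlying (\ref{gausscov}) rigorously gives $h_t(x)=i(\mathcal F^{-1}[\phi^{-1}(-\cdot)]\ast(xg_t))(x)$, so that $d(s,t)^2=\Delta^{-2}\|H_{g_t-g_s}\|_{L^2(P)}^2$ where $H_g:=\mathcal F^{-1}[\phi^{-1}(-\cdot)]\ast(xg)$. Uniform admissibility of $\{g_t:|t|\ge\zeta\}$ from Lemma~\ref{adm}, inserted into the $L^{2+\eps}(P)$ argument of Section~\ref{SecStochTerm} (specialised to $\mathcal FK_h\equiv 1$, which is legitimate here since the spectral cut-off was only needed there to control the remainder), yields $\sup_{|t|\ge\zeta}\Sigma_{t,t}<\infty$. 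The $t$-dependent refinements $\|g_t^c\|_{H^1}\lesssim(1+|t|)^{-1/2}$ and $|\mathcal Fg_t^s(u)|\lesssim(1+|t|)^{-1}(1+|u|)^{-1}$ from the same lemma in addition force $\Sigma_{t,t}\to 0$ as $|t|\to\infty$, so the tails of $T$ collapse in the $d$-geometry and only a bounded subset contributes nontrivially.

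On such a bounded subset I would establish a Hölder modulus $d(s,t)\lesssim|t-s|^\alpha$ with some $\alpha>0$. For $\zeta\le s<t$, the difference $\tilde g:=g_s-g_t$ satisfies $x\tilde g=\mathbf{1}_{[s,t)}$, and the factorization noted after Theorem~\ref{Thm1},
\[
\mathcal F^{-1}[\phi^{-1}(-\cdot)]\ast\mathbf{1}_{(-\infty,r]}=K\ast(\mathbf{1}_{(-\infty,r]}+\delta_r),\qquad K:=\mathcal F^{-1}[(1+iu)^{-1}\phi^{-1}(-u)]\in L^1(\R)\cap L^2(\R),
\]
gives $H_{\tilde g}=K\ast(\mathbf{1}_{[s,t)}+\delta_t-\delta_s)$. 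Redoing the $L^2(P)$ estimates of Section~\ref{SecStochTerm} for the singular split $\tilde g^s=\tilde g$, $\tilde g^c=0$, and exploiting the extra decay $|\mathcal F\tilde g^s(u)|\lesssim\min(t-s,(1+|u|)^{-1})$ as well as $|\mathcal F[x\tilde g^s](u)|\lesssim\min(t-s,|u|^{-1})$, should extract the required polynomial factor in $|t-s|$; the mirrored case on $(-\infty,-\zeta]$ and the mixed-sign case follow by linearity in $g$. A Hölder modulus then gives $N(\eps,T,d)=O(\eps^{-1/\alpha})$ for small $\eps$, so Dudley's integral converges and the conclusion follows.

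The main obstacle is the $L^2(P)$ Hölder bound itself: a direct Plancherel estimate controls only the $L^2(\R)$ norm, which is insufficient because $P$ may charge points (as in the compound Poisson case where $P(\{0\})>0$). This is precisely where pseudo-locality of the deconvolution operator plays a decisive role -- the factorization through $K\in L^1\cap L^2$ isolates the singular Dirac contributions $K(\cdot-t)-K(\cdot-s)$ as translates of a single function, whose $L^2(P)$-norms are controlled via $L^2$-translation continuity together with Lemma~\ref{LemmaLevyProp}(a) and the finite moments of $P$; tracking uniform constants in $s,t$ is ensured by the quantitative admissibility estimates of Lemma~\ref{adm}.
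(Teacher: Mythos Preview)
Your overall architecture is exactly that of the paper: verify Dudley's criterion by (i) showing $\Sigma_{t,t}\to 0$ as $|t|\to\infty$ via the quantitative bounds of Lemma~\ref{adm}, and (ii) establishing a H\"older modulus for the intrinsic metric on bounded $t$. The tail argument and the identification $h_t-h_s=i\,\mathcal F^{-1}[\phi^{-1}(-\cdot)]\ast\mathbf 1_{(s,t]}$ are also the paper's.

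The gap lies in step (ii). Your proposed route---factor through $K=\mathcal F^{-1}[(1+iu)^{-1}\phi^{-1}(-u)]\in L^1\cap L^2$ and control the Dirac part $K(\cdot-t)-K(\cdot-s)$ by ``$L^2$-translation continuity together with Lemma~\ref{LemmaLevyProp}(a)''---does not deliver the needed quantitative bound. Translation continuity of $K$ in $L^2(\R)$ is merely qualitative; to get a rate you would need Besov/Sobolev regularity of $K$, and even then the estimate is in $L^2(\R)$, not $L^2(P)$. The passage to $L^2(P)$ fails precisely where you say it matters: if $P(\{0\})>0$ (compound Poisson) the contribution is $|K(-t)-K(-s)|^2\,P(\{0\})$, which requires \emph{pointwise} H\"older continuity of $K$ away from the origin, something that does not follow from $K\in L^1\cap L^2$ and that Lemma~\ref{LemmaLevyProp}(a) (which concerns $xP$, not $K$) cannot supply. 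Your fallback of ``redoing the $L^2(P)$ estimates of Section~\ref{SecStochTerm}'' is in the right spirit but is left as a hope (``should extract''); moreover those estimates prove boundedness, not increments, and the requisite adaptation is nontrivial.

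The paper closes this gap with a dedicated tool, Proposition~\ref{PropL2PEst}, proved via the pseudo-differential identity
\[
(\mathcal F^{-1}[\phi^{-1}(-\cdot)]\ast f)(x)=\Big(\tfrac{1}{i\cdot}\,\mathcal F^{-1}[(\phi^{-1}(-\cdot))']\Big)\ast f(x),\qquad x\notin\supp(f),
\]
valid because $(\phi^{-1})'\in L^2$ under Assumption~\ref{gen}. This is the correct incarnation of pseudo-locality here: it does not factor through the kernel $K$, but instead gains a factor $1/x$ that kills the potential singularity of $P$ at zero, while the far-field part is handled by Hausdorff--Young and H\"older. Applied to $f=\mathbf 1_{(s,t]}$ this yields directly
\[
\|h_t-h_s\|_{2,P}^2\lesssim\bigl\|(1+|u|)^{1-\eps}\mathcal F\mathbf 1_{(s,t]}\bigr\|_{L^{2+4/\eps}}^2+\int\frac{\mathbf 1_{(s,t]}(y)}{1+y^2}\,dy\lesssim |t-s|^{\eps(3+2\eps)/(2+\eps)},
\]
which is the missing H\"older estimate. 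Replacing your $K$-factorization argument by this proposition (or an equivalent derivation of the identity above) would complete your proof.
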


The proof moreover implies that $(-\zeta, \zeta)^c$ is totally bounded in the metric $d$. Therefore (a version of) $\mathbb G^\varphi$ concentrates on the separable subspace of $\ell^\infty((-\zeta, \zeta)^c)$ consisting of bounded $d$-uniformly continuous functions on $(-\zeta, \zeta)^c$, from which we may in particular conclude that $\mathbb G^\varphi$ defines a Borel-random variable in that space, and hence is also a Borel random variable in the ambient space $\ell^\infty((-\zeta, \zeta)^c)$.

Next to Dudley's entropy integral, the main tool in the proof of Theorem \ref{scgauss} is the following bound for the pseudo-differential operator ${\cal F}^{-1}[\phi^{-1}(-u)]$. For $f \in L^2(\mathbb R)$ we set ${\cal F}^{-1}[\phi^{-1}(-\cdot)] \ast f := {\cal F}^{-1}[\phi^{-1}(-u) {\cal F}f(u)]$ which is well defined at least in $H^{(1-\eps)/2}(\mathbb R)$ in view of Lemma \ref{LemmaLevyProp}. Alternatively, $\|{\cal F}^{-1}[\phi^{-1}(-\cdot)] \ast f \|_{L^2} \lesssim \norm{(1+\abs{u})^{(1-\eps)/2}{\cal F}f(u)}_{L^{2}}$ whenever $f \in H^{(1-\eps)/2}(\R)$, but such an inequality is not sufficient for our purposes. We need a stronger estimate for functions $f$ supported away from the origin, and with the $\|\cdot\|_{L^2}$-norm replaced by the $\|\cdot\|_{2,P}$-norm. Intuitively speaking, and considering the example $f=1_{(s,t]}, s<t<0,$ relevant below, this strengthening is possible since the locations of singularities of $1_{(s, t]}$ and of $P$ (at the origin) are separated away from each other, and since this remains so after application of the pseudo-local operator ${\cal F}^{-1}[\phi^{-1}(-\cdot)]\ast (\cdot)$ to $f$.

\begin{proposition}\label{PropL2PEst}
Grant Assumption \ref{gen} and define $\norm{h}_{2,P}:= (\int h^2dP)^{1/2}$. For $f \in L^2(\R)$ with $\supp(f)\cap(-\delta,\delta)=\varnothing$ for some $\delta>0$ we have
\begin{equation}
\norm{{\cal F}^{-1}[\phi^{-1}(-u)]\ast f}_{2,P} \lesssim \norm{(1+\abs{u})^{1-\eps}{\cal F}f(u)}_{L^{2+4/\eps}(\R)}+\Big(\int \frac{f(y)^2}{1+y^2}\,dy\Big)^{1/2}\label{EqPhiL2}
\end{equation}
provided the right-hand side is finite. The constant in this bound depends only on $\delta$.
\end{proposition}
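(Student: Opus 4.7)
The plan is to exploit pseudo-locality of the operator $T:=\mathcal{F}^{-1}[\phi^{-1}(-\cdot)]\ast$. The key structural identity, obtained by differentiating in the spectral variable, is
\[
xG(x)=iH(x),\qquad G:=\mathcal{F}^{-1}[\phi^{-1}(-\cdot)],\quad H:=\mathcal{F}^{-1}[(\phi^{-1})'(-\cdot)],
\]
understood in the distribution sense; by Lemma \ref{LemmaLevyProp}(b), $H\in L^2(\R)\cap L^\infty(\R)$. This identifies the restriction of $G$ to $\R\setminus\{0\}$ with the locally bounded function $iH(z)/z$, so the only genuinely distributional contribution of $G$ is concentrated at $0$. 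Combined with Lemma \ref{LemmaLevyProp}(a), which shows that $p(x)=(xp(x))/x$ is bounded on $\{|x|\ge\delta/2\}$, this suggests the splitting
\[
\|Tf\|_{2,P}^2 \;\lesssim\; \sup_{|x|\le\delta/2}|Tf(x)|^2 \;+\; \|Tf\|_{L^2(\R)}^2,
\]
which reduces the proposition to two separate bounds.

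For the far-from-origin term I would estimate $\|Tf\|_{L^2}^2\thicksim\int|\phi^{-1}(u)|^2|\mathcal{F}f(u)|^2\,du$ by H\"older with conjugate exponents $p=(2+\varepsilon)/2$ and $q=(2+\varepsilon)/\varepsilon$, applied to the decomposition $|\phi^{-1}|^2|\mathcal{F}f|^2=\bigl[|\phi^{-1}|^2(1+|u|)^{-2(1-\varepsilon)}\bigr]\cdot\bigl[(1+|u|)^{2(1-\varepsilon)}|\mathcal{F}f|^2\bigr]$. A short computation gives $2p=2+\varepsilon$ and $2q=2+4/\varepsilon$, matching precisely the exponent appearing in the statement. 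The remaining weighted norm of $\phi^{-1}$ is finite: combining the pointwise bound $|\phi^{-1}(u)|\lesssim(1+|u|)^{(1-\varepsilon)/2}$ from Lemma \ref{LemmaLevyProp}(b) with Assumption \ref{gen}(c) yields
\[
|\phi^{-1}|^{2+\varepsilon}(1+|u|)^{-(2+\varepsilon)(1-\varepsilon)} \;\lesssim\; |\phi^{-1}|^2(1+|u|)^{-2+2\varepsilon}\;\in\;L^1(\R),
\]
since the ratio of the two sides is bounded by $(1+|u|)^{-\varepsilon(1-\varepsilon)/2}\le 1$.

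For the near-origin term I would rely on the pseudo-local representation
\[
Tf(x) \;=\; i\int\frac{H(x-y)}{x-y}\,f(y)\,dy \qquad\text{for }|x|\le\delta/2,
\]
made rigorous by writing $G=\chi G+(1-\chi)G$ for a smooth cutoff $\chi$ equal to $1$ on $\{|z|\ge\delta/2\}$ and vanishing on $\{|z|\le\delta/4\}$: the convolution $((1-\chi)G)\ast f$ evaluated at $|x|\le\delta/2$ vanishes because $|x-y|\ge\delta/2$ on $\supp f$, while $\chi G=i\chi(z)H(z)/z$ is a bona fide $L^2\cap L^\infty$ function. Cauchy-Schwarz together with $\|H\|_{L^2}<\infty$ gives $|Tf(x)|^2\lesssim\int f(y)^2/(x-y)^2\,dy$, and since $|x|\le\delta/2\le|y|/2$ on the relevant region, $(x-y)^{-2}\le 4y^{-2}\lesssim(1+y^2)^{-1}$, so $\sup_{|x|\le\delta/2}|Tf(x)|^2\lesssim\int f(y)^2/(1+y^2)\,dy$. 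Adding the two bounds completes the proof.

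The principal technical obstacle is the rigorous justification of the identification $G|_{\R\setminus\{0\}}=iH(z)/z$ and of the pointwise formula for $Tf$. For $\varphi\in C^\infty_c(\{|z|>\delta/4\})$, the function $\varphi/z$ lies in the same space, so $\langle G,\varphi\rangle=\langle xG,\varphi/x\rangle=i\int H(z)\varphi(z)/z\,dz$, which settles the identification on $\R\setminus\{0\}$; any $\delta_0$-type component of $G$ concentrated at $0$ contributes nothing when convolved with $f$ vanishing near $0$. The pointwise formula for $Tf$ then follows by approximating $f\in L^2$ by $C^\infty_c$ functions supported in $\{|y|\ge\delta/2\}$: Cauchy-Schwarz with integration effectively restricted to $|x-y|\ge\delta/2$ controls the pointwise integral uniformly in $x$, while $\|Tf_n-Tf\|_{L^2}\to 0$ selects an a.e.\ subsequence identifying the pointwise limit.
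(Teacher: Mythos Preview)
Your proof is correct and follows essentially the same strategy as the paper: split the $L^2(P)$-norm into the region $|x|\le\delta/2$ near the origin and its complement, use the pseudo-local identity $xG=iH$ with $H=\mathcal F^{-1}[(\phi^{-1})'(-\cdot)]\in L^2$ together with Cauchy--Schwarz for the near-origin piece, and Fourier analysis plus H\"older for the far piece. Your handling of the far-from-origin term is in fact slightly more economical than the paper's: you use the $L^\infty$-bound $p(x)\lesssim |x|^{-1}\le 2/\delta$ on $\{|x|>\delta/2\}$ and Plancherel, whereas the paper uses H\"older with $L^{2+\eps}$ and $L^{(2+\eps)/\eps}$ followed by the Hausdorff--Young inequality before the final H\"older splitting; both routes land on the same exponent $2+4/\eps$.
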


\begin{proof}
We  shall need the pseudo-differential operator identity
\begin{equation}\label{EqPsiId}
({\cal F}^{-1}[\phi^{-1}(-u)]\ast f)(x)=\Big(\Big(\frac{1}{i\cdot}{\cal F}^{-1}[(\phi^{-1}(-u))']\Big)\ast f\Big)(x),\quad f\in L^2(\R),\;x\notin\supp(f),
\end{equation}
where the right hand side is defined classically. This identity is fundamental for establishing the property of pseudo-locality in a $C^\infty$-framework, see e.g. Theorems 8.8 and 8.9 in \cit{Fo95}. Let us verify this identity here, where $\phi^{-1}\notin C^\infty$. Consider $f \in L^2(\R)$ and $g$ any smooth compactly supported test function such that $\supp(f)\cap\supp(g)=\varnothing$. Then $(f \ast g(-\cdot))(0)=0$ and $f \ast g$ is smooth from which we may conclude that also $x^{-1}(f\ast g(-\cdot))(x)$ (equal to $(f \ast g)'(0)$ at zero) is in $L^2(\R)$ and smooth, and that
\[ {\cal F}\Big[\frac{(f\ast g(-\cdot))(x)}{ix}\Big]'(u)={\cal F}[f\ast g(-\cdot)](u)={\cal F}f(u)\overline{{\cal F}g(u)}.
\]
Plancherel's formula, integration by parts and Fubini's theorem (using $(\phi^{-1})'\in L^2(\R)$ from Lemma \ref{LemmaLevyProp} and the support properties) yield
\begin{align*}
\int ({\cal F}^{-1}[\phi^{-1}(-u)]\ast f)(x)g(x)\,dx &=\frac{1}{2\pi} \int \phi^{-1}(-u){\cal F} f(u)\overline{{\cal F}g(u)}\,du\\
&=\frac{-1}{2\pi} \int (\phi^{-1}(-u))' {\cal F}\Big[\frac{(f\ast g(-\cdot))(x)}{ix}\Big](u)\,du\\
&=\int \frac{{\cal F}^{-1}[(\phi^{-1}(-u))'](x)}{ix} (f\ast g(-\cdot))(-x)\,dx\\
&=\int \Big(\frac{{\cal F}^{-1}[(\phi^{-1}(-u))']}{i\cdot}\ast f\Big)(x)g(x)\,dx.
\end{align*}
In this calculation the boundary terms vanish due to the fast decay of ${\cal F}[(f\ast g(-\cdot))(x)/x]$ ($g$ smooth). Consequently, \eqref{EqPsiId} follows by testing with all $g$ supported near $x$.

We use H\"{o}lder's inequality, the Hausdorff-Young inequality from Fourier analysis, the bound $p(x)\lesssim \abs{x}^{-1}$ from Lemma \ref{LemmaLevyProp}, the pseudo-differential operator identity, again H\"older's inequality, Assumption \ref{gen}(c) and $(\phi^{-1})'\in L^2$ in view of Lemma \ref{LemmaLevyProp} in this order to obtain for $\delta'=\delta/2$:
\begin{align*}
& \babs{\int ({\cal F}^{-1}[\phi^{-1}(-u)]\ast f)^2(x)P(dx)\,dx}
\le \norm{{\cal F}^{-1}[\phi^{-1}(-u)]\ast f}_{L^{2+\eps}(\R)}^2 \norm{p}_{L^{(2+\eps)/\eps}([-\delta',\delta']^c)}\nonumber\\
 &\quad + \norm{{\cal F}^{-1}[\phi^{-1}(-u)]\ast f}_{L^\infty([-\delta',\delta'])}^2 P([-\delta', \delta])\\
&\lesssim \norm{\phi^{-1}(-u){\cal F}f}_{L^{(2+\eps)/(1+\eps)}}^2\norm{xp}_\infty
(\delta')^{-2/(2+\eps)}\\
&\quad +\norm{({\cal F}^{-1}[(\phi^{-1})'(-u)](x)/x)\ast f}_{L^\infty([-\delta',\delta'])}^2\\
&\lesssim \norm{\phi^{-1}(-u)(1+\abs{u})^{-1+\eps}}_{L^2}^2 \norm{(1+\abs{u})^{1-\eps}{\cal F}f(u)}_{L^{2+4/\eps}}^2(\delta')^{-2/(2+\eps)}\\
&\quad +\norm{{\cal F}^{-1}[(\phi^{-1})']}_{L^2}^2\sup_{x\in[-\delta',\delta']} \int \frac{f(y)^2}{(x-y)^2}\,dy\\
&\lesssim \norm{(1+\abs{u})^{1-\eps}{\cal F}f(u)}_{L^{2+4/\eps}}^2+\int \frac{f(y)^2}{1+y^2}\,dy,
\end{align*}
provided $f$ is such that the last line is finite. Take square roots to deduce the asserted inequality with a constant independent of $f$.
\end{proof}

\begin{proof}[Proof of Theorem \ref{scgauss}]
We consider the generalised Brownian bridge process arising as the pointwise weak limit of \eqref{EqNun}, so with ${\cal F}K_h\equiv1$, and further split $g_t=g_t^c+g_t^s$ as in the proof of Lemma \ref{adm}. More precisely, we study the Gaussian process indexed by $(i \Delta)^{-1}$ times
\begin{align}\label{htrig}
h_t(x)&= {\cal F}^{-1}[(\phi^{-1})'(-u){\cal F}g_t(u)](x)+\left({\cal F}^{-1}[\phi^{-1}(-u){\cal F}g_t(u)](x) \right)ix  \\
&={\cal F}^{-1}[(\phi^{-1})'(-u){\cal F}g_t(u)](x)+\left({\cal F}^{-1}[\phi^{-1}(-u){\cal F}g^c_t(u)](x) + {\cal F}^{-1}[\phi^{-1}(-u){\cal F}g^s_t(u)](x)\right)ix, \notag
\end{align}
where $|t|\ge \zeta$. The theorem is thus proved if we show that the class of
functions $\mathcal G = \{(i \Delta)^{-1}h_t: t \in\R\setminus(-\zeta,\zeta)\}$
is bounded in $L^2(P)$ and $P$-pregaussian (cf.~\cit{D99}, Chapter 2, p.92-93).
In Section \ref{SecStochTerm} above we have shown the
$L^{2+\eps}(P)$-boundedness of the same function class, but also involving the
kernel $K_h$. The same proof, replacing ${\cal F}K_h$ just by one, shows that
$\mathcal G$ is even $L^{2+\eps}(P)$-bounded. To establish that $\mathcal G$ is
pregaussian it suffices, by Dudley's integral-criterion, to find a suitable
$\eta$-covering of $\mathcal G$ in the intrinsic covariance metric $d(s,t) :=
\|h_t-h_s\|_{2,P}$, for every $h_t,h_s \in \mathcal G$.

Consider first increments for $s<t, |s-t| \le 1, \min(|s|,|t|) \ge \zeta$,
\begin{eqnarray*}
h_t(x)-h_s(x) &=&{\cal F}^{-1}[(\phi^{-1})'(-u){\cal F}[g_t-g_s](u)](x)+{\cal F}^{-1}[\phi^{-1}(-u){\cal F}[g_t-g_s](u)](x)ix \\
&=&  i{\cal F}^{-1}[\phi^{-1}(-u) {\cal F}[x(g_t-g_s)](u)](x) \\
&=& i {\cal F}^{-1}[\phi^{-1}(-u)]\ast{\bf 1}_{(s,t]}(x),
\end{eqnarray*}
for which Proposition \ref{PropL2PEst} yields, with $f = 1_{(s,t]}$, the H\"older-type bound
\[ \norm{h_t-h_s}_{2,P}^2\lesssim \norm{\sin((t-s)u)u^{-1}(1+\abs{u})^{1-\eps}}_{L^{2+4/\eps}}^2+
\abs{t-s}\lesssim \abs{t-s}^{\eps(3+2\eps)/(2+\eps)}.
\]
This will give us a polynomially growing covering of $\mathcal G$ for all $t$ in a fixed compact interval.

To deal with large $|t|$ we shall establish the polynomial decay bound
$\norm{h_t}_{2,P}\lesssim |t|^{-1/2}$ as $\abs{t}\to\infty$, and we shall do
this for each of the three terms in the second line of (\ref{htrig})
separately.

For the first term, say $h_t^{(1)}$, this follows from $$\|h^{(1)}_t\|^2_{2,P}
\le\|h^{(1)}_t\|^2_{L^\infty} \le \|(\phi^{-1})'(-\cdot)\mathcal Fg_t\|^2_{L^1}
\le \|(\phi^{-1})'(-\cdot)\|^2_2 \|g_t\|^2_{L^2} \lesssim \int_{-\infty}^t
|x|^{-2} \sim |t|^{-1}$$ as $t \to -\infty$, and likewise for $t \to \infty$,
using the Cauchy-Schwarz inequality and Lemma \ref{LemmaLevyProp}(b).

For the second term $h_t^{(2)}$ we use the Cauchy-Schwarz inequality, the
finite second moment of $P$, Assumption \ref{gen}(c) and Lemma \ref{adm} to the
effect that $$\|h^{(2)}_t\|_{2,P}^2 \le \int x^2P(dx) \|\phi^{-1}(-u){\cal
F}g^c_t(u)\|^2_{L^1} \lesssim \|\mathcal F^{-1}[\phi^{-1}]\|^2_{H^{-1}}
\|g^c_t\|^2_{H^1}  \lesssim (1+|t|)^{-1}.$$

For the third term, since $xP$ has a bounded density by Lemma \ref{LemmaLevyProp}(a), it suffices to bound
\[ \norm{{\cal F}^{-1}[\phi^{-1}(-u){\cal F}g_t^s](x)|x|^{1/2}}_{L^2},\]
which by the Cauchy-Schwarz inequality can be estimated by
\[\norm{{\cal F}^{-1}[\phi^{-1}(-u){\cal F}g_t^s]x}_{L^2}^{1/2} \norm{{\cal F}^{-1}[\phi^{-1}(-u){\cal F}g_t^s]}_{L^2}^{1/2}.
\]
Now by Lemma \ref{adm} we know $\abs{{\cal F}g_t^s(u)}\lesssim (1+\abs{u})^{-1}(1+\abs{t})^{-1}, |{\cal F}[xg_t^s](u)|\lesssim (1+|u|)^{-1}$ and since $|(\phi^{-1})'|(u) \lesssim (1+|u|)^{-1}|\phi^{-1}(u)|$ from the proof of Lemma \ref{LemmaLevyProp} we can estimate the product in the last display to obtain the overall bound $$\|h_t^{(3)}\|_{2, P} \lesssim (1+|t|)^{-1/2} \|\phi^{-1}(-u)(1+|u|)^{-1}\|_{L^2} \lesssim (1+|t|)^{-1/2}$$ in view of Assumption \ref{gen}(c).

In conclusion, we can construct an $\eta$-covering of $\mathcal G$ by the functions $(i \Delta)^{-1}h_{t_i}$ with $t_i=i/M$ and $i=-M^2,\ldots,+M^2$ where $M=M(\eta)$ grows polynomially in $\eta^{-1}$. This shows that the covering numbers corresponding to this $\eta$-net satisfy
\begin{equation} \label{vcgauss}
\log(N(\mathcal G, L^2(P), \eta)) \lesssim \log(\eta^{-1}).
\end{equation}
The square-root of this entropy bound is integrable at zero as a function of $\eta$, which completes the proof by Dudley's continuity criterion (Theorem 2.6.1 in \cit{D99}).
\end{proof}

\subsubsection{Uniform CLT for the linear term}

\begin{theorem} \label{main}
Grant Assumption \ref{gen} and
\[(\nu_n^\varphi (t_1), \dots \nu_n^\varphi (t_k)) \to^\mathcal L (\mathbb G^\varphi (t_1), \dots, \mathbb G^\varphi (t_k))
\]
as $n \to \infty$ for every finite set $(t_1, \dots, t_k) \subset (-\zeta, \zeta)^c$. If $h_n\gtrsim n^{-1/(4\alpha)}$ for some $\alpha>(1-\eps)/2$, so in particular if $h_n \thicksim n^{-1/2}(\log n)^{-\rho}$ for some $\rho>1$, then
\begin{equation*}
\nu_n^\varphi \to^\mathcal L \mathbb G^\varphi ~~in ~~ \ell^\infty ((-\zeta, \zeta)^c)
\end{equation*}
as $n \to \infty$.
\end{theorem}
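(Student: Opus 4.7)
The plan is to combine the hypothesised finite-dimensional convergence with an asymptotic equicontinuity statement: by standard empirical process theory (e.g.~Theorem 3.7.4 in \cit{D99} or Theorem 1.5.4 in \cit{VW96}), since the intrinsic covariance metric $d$ of $\mathbb G^\phi$ turns $(-\zeta,\zeta)^c$ into a totally bounded pseudo-metric space (via the covering bound \eqref{vcgauss} from the proof of Theorem \ref{scgauss}), convergence of $\nu_n^\phi$ in $\ell^\infty((-\zeta,\zeta)^c)$ is implied by the fidi convergence together with
\[
\lim_{\delta\downarrow 0}\limsup_{n\to\infty}\Pr\Big(\sup_{d(s,t)\le\delta,\,|s|,|t|\ge\zeta}|\nu_n^\phi(t)-\nu_n^\phi(s)|>\tau\Big)=0\qquad\text{for every }\tau>0.
\]
Writing $\nu_n^\phi(t)=\sqrt n(P_n-P)(f_{t,n})$ with $f_{t,n}$ the function in square brackets in the integrand of \eqref{EqNun}, the task is therefore one of tightness of the empirical process indexed by the (smoothed, $n$-dependent) class $\mathcal F_n=\{f_{t,n}:|t|\ge\zeta\}$.

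I would attack this via a Talagrand-type exponential maximal inequality applied to $\mathcal F_n$, in the spirit of the smoothed empirical process approach of \cit{GN08}. The first step is to verify that the arguments proving Theorem \ref{scgauss} go through with $\phi^{-1}(-u)$ replaced throughout by $\phi^{-1}(-u)\mathcal FK_{h_n}(-u)$: since $|\mathcal FK_{h_n}|\le\|K\|_{L^1}$ and $\supp(\mathcal FK_{h_n})\subset[-h_n^{-1},h_n^{-1}]$, the spectral cutoff only improves the $L^2$- and $L^p$-estimates used in Proposition \ref{PropL2PEst}, and one obtains uniformly in $n$ the same H\"older bound $\|f_{t,n}-f_{s,n}\|_{2,P}^2\lesssim|t-s|^{\eps(3+2\eps)/(2+\eps)}$ and the tail decay $\|f_{t,n}\|_{2,P}\lesssim(1+|t|)^{-1/2}$. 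Consequently the logarithmic entropy bound \eqref{vcgauss} also holds for $\mathcal F_n$ uniformly in $n$, and Dudley's chaining yields control of $\E\sup_{d(s,t)\le\delta}|\nu_n^\phi(t)-\nu_n^\phi(s)|$ by an integral of $\sqrt{\log(\eta^{-1})}$ which is integrable at zero. Uniform $L^{2+\eps}(P)$-integrability of an envelope of $\mathcal F_n$ was essentially already obtained when verifying the Lyapounov condition \eqref{MainCond} in Section \ref{SecStochTerm} uniformly in $h$.

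The main obstacle is closing the argument by controlling the non-chaining, envelope-driven terms in Talagrand's concentration inequality. By Lemma \ref{LemmaLevyProp}(b), $|\phi^{-1}(u)|\lesssim(1+|u|)^{(1-\eps)/2}$, so the spectral cutoff to $[-h_n^{-1},h_n^{-1}]$ together with Cauchy--Schwarz only yields the $L^\infty$-bound $\|f_{t,n}\|_\infty\lesssim h_n^{-\alpha}$ for any $\alpha>(1-\eps)/2$. The bandwidth assumption $h_n\gtrsim n^{-1/(4\alpha)}$ is precisely what is needed to render both $h_n^{-\alpha}/\sqrt n$ and $h_n^{-2\alpha}/n$ vanishingly small, so that the Talagrand bound collapses to the Dudley chaining contribution, which tends to zero with $\delta$. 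Combined with the hypothesised fidi convergence, the asymptotic equicontinuity obtained in this way yields $\nu_n^\phi\to^{\mathcal L}\mathbb G^\phi$ in $\ell^\infty((-\zeta,\zeta)^c)$.
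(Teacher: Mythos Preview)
Your high-level strategy is the same as the paper's: reduce to asymptotic equicontinuity of the process $\nu_n^\phi$, and exploit the kernel smoothing to control the envelope of the ($n$-dependent) indexing class. The gap lies in the step you treat as routine, namely controlling $E\sup_{d(s,t)\le\delta}|\nu_n^\phi(t)-\nu_n^\phi(s)|$ by a Dudley integral based on the $L^2(P)$-entropy bound inherited from Theorem \ref{scgauss}. Dudley's integral controls \emph{Gaussian} chaining; for the empirical process the increments $\sqrt n(P_n-P)(f_{t,n}-f_{s,n})$ are only Bernstein-type sub-exponential, and the standard maximal inequalities (Koltchinskii--Pollard, Talagrand, or the bounds (22) in \cit{GN08} that actually underlie Theorem \ref{pregaussian}) require either uniform $L^2(Q)$-covering numbers or bracketing numbers, not merely $L^2(P)$-covering. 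You establish only the latter. Since the limiting class $\{h_t:|t|\ge\zeta\}$ is \emph{not} $P$-Donsker in the genuinely ill-posed case (the paper notes this explicitly at the start of Section \ref{SecCritical}), there is no fixed Donsker class into which all $\mathcal F_n$ embed, and one cannot simply quote a uniform entropy bound from the pregaussian property; something structural has to be exploited. Your envelope calculation $\|f_{t,n}\|_\infty\lesssim h_n^{-\alpha}$ and the bandwidth condition are exactly the right ingredients for the Bernstein part, but without a matching uniform-entropy or bracketing input the chaining half of the argument does not close.

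The paper resolves this by a decomposition that you bypass. Writing $g_t=g_t^c+g_t^s$ and splitting the integrand of $\nu_n^\phi$ into four terms $T_1+T_2+T_3+T_4$, the pieces $T_2,T_3,T_4$ are shown to lie in \emph{fixed} balls of $H^1(\R)$ or weighted $H^{(1+\eps)/2}(\R)$, which are $P$-Donsker by the Besov-type results of \cit{NP07}; tightness of these terms therefore follows directly. The critical term $T_1$ is further reduced, via the truncated kernel $K_h^{(0)}$, to a smoothed empirical process indexed by \emph{translates} $q(\cdot-t)$ of a single function $q\in B^{(1+\eps)/2}_{1,\infty}(\R)$. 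Translates of a BV function form a VC-type class with polynomial $L^2(Q)$-covering numbers uniformly in $Q$, which supplies precisely the uniform-entropy input your argument is missing; the paper then verifies conditions (a)--(e) of the Gin\'e--Nickl Theorem \ref{pregaussian} to conclude. A secondary point: your claim that inserting $\mathcal FK_{h_n}$ into Proposition \ref{PropL2PEst} ``only improves'' the estimates glosses over the pseudo-locality step \eqref{EqPsiId}, which relies on the support condition $\supp(f)\cap(-\delta,\delta)=\varnothing$; after convolving with $K_h$ the support spreads, which is exactly why the paper passes to the compactly supported truncation $K_h^{(0)}$ before invoking this machinery.
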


\begin{proof}
We set $\Delta=1$ and suppose that the kernel is symmetric, i.e. ${\cal F}K_h(-u)={\cal F}K_h(u)$, to ease notation. Given convergence of the finite-dimensional distributions it suffices to prove uniform tightness of $\{\nu_n^\varphi\}_{n \in \mathbb N}$ in $\ell^\infty((-\zeta, \zeta)^c)$, cf.~\cit{VW96}, Chapter 1.5. We shall in what follows decompose
$\nu_n^\phi$ into a sum of several processes indexed by $t$, and prove tightness of each of these processes separately, which implies tightness of the sum of the processes by the asymptotic equicontinuity characterisation of tightness in $\ell^\infty((-\zeta, \zeta)^c)$ (e.g., Theorem 1.5.7 in \cit{VW96}) and by the triangle inequality. We shall also frequently use the simple fact that tightness is preserved under isometric injections of $\ell^\infty((-\zeta,\zeta)^c)$: if $\nu$ is a process indexed by $s$ and $\nu'$ a process indexed by functions $f_s \in \mathcal F$, and if $\nu(s)=\nu'(f_s)$ for every $s \in(-\zeta,\zeta)^c$, then tightness of $\nu'$ in $\ell^\infty(\mathcal F)$ (normed by $\|H\|_\mathcal F := \sup_{f\in \mathcal F}|H(f)|$) implies tightness of $\nu$ in $\ell^\infty((-\zeta,\zeta)^c)$.

We decompose $g_t=g_t^c+g_t^s$ as in the proof of Lemma \ref{adm} with the particular choice $\chi(x)=e^x1_{(-\infty,0]}(x)$ for $t<0$, and symmetrically if $t>0$. The integrand of $\nu_n^\phi(t)$ in \eqref{EqNun} equals
\begin{align*}
 &{\cal F}^{-1}[\phi^{-1}(-u){\cal F}[ixg_t^s]{\cal F}K_h](x)+ {\cal F}^{-1}[\phi^{-1}(-u){\cal F}g_t^s{\cal F}[ixK_h]](x)\\
&\quad + {\cal F}^{-1}[\phi^{-1}(-u){\cal F}g_t^c{\cal F}K_h](x)ix+ {\cal F}^{-1}[(\phi^{-1})'(-u){\cal F}g_t^c{\cal F}K_h](x)\\
&=:(T_1+T_2+T_3+T_4)(x)
\end{align*}
The process indexed by the component $T_1$ is critical and its tightness is proved in Section \ref{SecCritical} below.

Concerning $T_2$, we have $\abs{\phi^{-1}(-u){\cal F}g_t^s{\cal
F}[ixK_h]}\lesssim\abs{\phi^{-1}(-u)} (1+\abs{u})^{-2}$ by
$\norm{xK_h}_{L^1}+\norm{(xK_h)'}_{L^1}\lesssim 1$, uniformly in $h$, and by
the admissibility of $g_t$. By Assumption \ref{gen}(c) we deduce that $T_2$
lies in a fixed norm ball of $H^1(\R)$. For $T_4$ we note $|(\phi^{-1})'|
\lesssim 1$, $\sup_{h>0,u}|\mathcal FK_h(u)| \le \|K\|_1<\infty$,
$\sup_{|t|\ge \zeta}\|g^c_t\|_{H^1}<\infty$ by Lemmas \ref{LemmaLevyProp} and
\ref{adm}, so $\{{\cal F}^{-1}[(\phi^{-1})'(-u){\cal F}g_t^c{\cal
F}K_h](\cdot), |t|\ge \zeta\}$ is bounded in $H^1(\R)$. For $T_3$ we use
$|\phi^{-1}(u)| \le (1+|u|)^{(1-\varepsilon)/2}$ and $$\|\mathcal F^{-1} \left[\phi^{-1}(-u) {\cal
F}g_t^c {\cal F}K_h\right]\|_{H^{(1+\varepsilon)/2}} \lesssim \|(1+|u|) {\cal
F}g_t^c\|_{L^2} = \|g_t^c\|_{H^1} <\infty,$$ uniformly in $|t|\ge \zeta$, again
by Lemmas \ref{LemmaLevyProp} and \ref{adm}. We conclude that the norms
$\|T_2+T_4\|_{H^1}$ and $\norm{T_3/x}_{H^{(1+\varepsilon)/2}}$ are bounded
uniformly in $t\in(-\zeta,\zeta)^c, h>0$. Each summand in $T_2+T_3+T_4$ is
therefore contained in a fixed $P$-Donsker-class: For $T_2+T_4$ this follows
from Proposition 1 in \cit{NP07} with $s=1, p=q=2$, and for $T_3$ we apply
Corollary 5 for weighted Besov-Sobolev spaces in \cit{NP07} with parameter
choice $s=(1+\varepsilon)/2$, $\beta=-1$, $p=q=2$, $\gamma=\eps/2$ noting that
the moment condition there is satisfied by
\eqref{EqMoments}. The empirical process $\nu_n^\phi$ is thus indexed by
functions $T_2+T_3+T_4$ that change with $n$ but that are contained in a
fixed $P$-Donsker class, and so is tight by the asymptotic equicontinuity
criterion.  Together with the tightness of the critical term, derived below,
this proves tightness of $\nu_n^\phi$.
\end{proof}

Combining the convergence of the finite-dimensional distributions from Corollary \ref{CorFidi} with Theorem \ref{main} and the uniform bounds on the remainder and bias term we have succeeded in proving Theorem \ref{Thm1}.

\subsubsection{The critical term}\label{SecCritical}

Note that in the ill-posed case $\lim_{\abs{u}\to\infty}|\phi(u)|=0$, for instance when $\phi(u)=(1-iu)^{-\alpha}$, the class involving $T_1$ with $\mathcal F K_h=1$ \textit{is not} $P$-Donsker even for $P$ with bounded density. The reason is, roughly speaking, that $\mathcal F^{-1}[\phi^{-1}(-\cdot)] \ast (e^{\cdot-t}1_{(-\infty, t]})$ is then unbounded at $t$, and classes that contain functions unbounded at any point cannot be Donsker for such $P$, cf.~the proof of Theorem 7 in \cit{N06}. This implies that one cannot use $h=0$, i.e., $K_h=\delta_0$, in the proofs, as could have been done in the 'noncritical' terms $T_2, T_3, T_4$ above. Rather, one needs to exploit the fact that the kernel $K_h$ smooths out the singularities for $h$ fixed, and if $h_n$ does not approach zero too fast, there is still hope to obtain a uniform central limit theorem, as shown in a different but conceptually related situation of Theorems 9 and 10 in \cit{GN08}.

As compactly supported kernels facilitate the arguments considerably, we introduce the truncated kernel
\[K_h^{(0)}:=K_h{\bf  1}_{[-\zeta/2,\zeta/2]}.\]
By the decay of $K$ and $K'$ from \eqref{EqKProp} we can again treat the term
involving $K_h-K_h^{(0)}$ by classical methods. Using
$\norm{K_h-K_h^{(0)}}_{BV} \lesssim h^{\beta-2}$ where $\|\cdot\|_{BV}$ is the
usual bounded variation norm, we obtain
\[ \abs{\phi^{-1}(-u){\cal F}[ixg_t^s]{\cal F}[K_h-K_h^{(0)}](u)}
\lesssim \abs{\phi^{-1}(-u)}(1+\abs{u})^{-2}h^{\beta-2},
\]
whence ${\cal F}^{-1}[\phi^{-1}(-u){\cal F}[ixg_t^s]{\cal F}[K_h-K_h^{(0)}]]\in
H^1(\R)$ follows, even with in $h$ shrinking and in $t$ uniform norms. As for
the terms $T_2, T_4$ above, we thus deduce the uniform tightness of this term
since norm balls in $H^1(\mathbb R)$ are universally Donsker.

Recalling $g_t^s(x)=x^{-1}e^{x-t}{\bf 1}_{(-\infty,t]}(x)$, the term involving the truncated kernel can be written as
\[ {\cal F}^{-1}[\phi^{-1}(-u){\cal F}[ixg_t^s]{\cal F}K_h^{(0)}]=iq(\cdot-t)\ast K_h^{(0)}
\]
with
\begin{equation} \label{q}
q(x):={\cal F}^{-1}[\phi^{-1}(-u)(1+iu)^{-1}](x).
\end{equation}
The regularity of $q$ in the scale of Besov spaces $B^s_{p,r}(\mathbb R)$ is $s=(1+\eps)/2$ for $p=1$ and $r=\infty$: Since $m(u)=\phi^{-1}(-u)(1+iu)^{-1/2+\eps/2}$ is  a Fourier multiplier on $B^{(1+\eps)/2}_{1,\infty}(\R)$ by Lemma \ref{LemmaLevyProp}(c), this assertion follows from the fact that $${\cal F}^{-1}[(1+iu)^{-1/2-\eps/2}](x)=\Gamma(1/2+\eps/2)^{-1}\abs{x}^{\eps/2-1/2}e^x{\bf 1}_{(-\infty,0]}(x)$$
(a Gamma-type density) is an element of that space. The latter follows either by checking directly that its $L^1$-modulus of smoothness satisfies $\omega(h)_1\lesssim h^{1/2+\eps}$ or by noting that multiplication by $(1+iu)^{(1-\eps)/2}$ in the Fourier domain is an isomorphism between $B^{(1+\eps)/2}_{1,\infty}(\R)$ and $B^1_{1,\infty}(\R)$ and ${\cal F}^{-1}[(1+iu)^{-1}](x)= e^x{\bf 1}_{(-\infty,0]}(x)$ is of bounded variation and thus contained in $B^1_{1,\infty}(\R)$. Moreover, by embedding theorems for Besov spaces, $q$ is then also an element of $B^s_{1,1}(\mathbb R)$ for any $s<(1+\eps)/2$ and thus also of $L^1(\mathbb R) \cap L^2(\mathbb R)$ . We refer to \cit{Tr10} for these standard properties of Besov spaces.

We are thus left with proving tightness of
\begin{equation} \label{sep}
\sqrt n \int_{\R} \big(q(\cdot-t) \ast K_h^{(0)}\big)(x) (P_n-P)(dx) = \sqrt n \int_\mathbb R q(y-t)
\big(K_h^{(0)} \ast (P_n-P)\big)(y)\,dy, \quad \abs{t}\ge\zeta,
\end{equation}
which is a smoothed empirical process indexed by  \begin{equation}\label{EqF}
\mathcal F = \left\{ q(\cdot-t) : |t| \ge \zeta \right\}.
\end{equation} The following general purpose result follows from the proof of Theorem 3 in \cit{GN08}, which builds on fundamental ideas in the classical paper \cit{GZ84}, and can be applied to the unbounded processes relevant here. For a given class of measurable functions $\mathcal{F}$ we write
\begin{equation*}
\mathcal{F}_{\delta }^{\prime }=\{f-g:f,g\in \mathcal{F},\left\Vert
f-g\right\Vert _{2,P}\le \delta \}.
\end{equation*}
We shall rather loosely use the standard empirical process terminology from \cit{GN08}.

\begin{theorem}
\label{pregaussian}Let $\mathcal{F}$ be any $P$-pregaussian class of
real-valued functions on $\mathbb{R}^{d}$ and let $\{\mu _{n}\}_{n=1}^{\infty }$ be
a sequence of finite signed measures defined on $\mathbb R^d$ satisfying $\sup_n \|\mu_n\|<\infty$. Let $\bar {\mu}_n(A)=\mu_n(-A)$. Assume that $\mathcal{F}\subseteq L^{1}(|\mu _{n}|)$ holds for every $n$ and, in addition,
\begin{enumerate}
\item for each $n$, the class $\tilde{\mathcal{F}}_{n}:=\{f\ast
\bar{\mu}_{n}:f\in \mathcal{F}\}$ consists of functions whose absolute
values are bounded by a constant $M_{n}$;

\item $\sup_{f\in \mathcal{F'_\delta}}E(f\ast \bar{\mu}_{n}(X))^{2} \le 4 \delta^2$ for every $\delta>0$ and $n \ge n_0\equiv n_0(\delta)$ large enough;

\item for i.i.d.~Rademacher variables $(\eps_i)_i$, independent of the $X_i$'s, we have
\begin{equation}
\left\Vert \frac{1}{\sqrt{n}}\sum_{i=1}^{n}\varepsilon
_{i}f(X_{i})\right\Vert _{(\tilde{\mathcal{F}}_{n}
)'_{1/n^{1/4}}}\rightarrow 0  \label{resdons}
\end{equation}
as $n\rightarrow \infty $ in outer probability;

\item $\cup_{n \ge 1}\tilde{\mathcal F}_n$ is in the $L^2(P)$-closure of $\sup_n\|\mu_n\|$-times the symmetric convex hull of some fixed $P$-pregaussian class of functions $\bar {\mathcal F}$.

\item For all $0<\eta <1$, the $L^2(P)$-metric entropy of $\tilde {\mathcal F}_n$ satisfies $H(\tilde{\mathcal{F}}_{n},L^{2}(P),\eta )\le \lambda _{n}(\eta )/\eta ^{2}$ for functions $\lambda _{n}(\eta )$ such that $\lambda _{n}(\eta
)\rightarrow 0$ and $\lambda _{n}(\eta )/\eta ^{2}\rightarrow
\infty $ as $\eta \rightarrow 0$, uniformly in $n$, and the
bounds $M_{n}$ of part (a) satisfy
\begin{equation}
M_{n}\le \left( 5\sqrt{\lambda _{n}(1/n^{1/4})}\right) ^{-1}
\label{envelope}
\end{equation}%
for all $n$ large enough.
\end{enumerate}
Then $\sqrt{n}(P_{n}-P)\ast \mu _{n}$ is uniformly tight in the Banach space $\ell^\infty (\mathcal F)$ (equipped with the uniform norm $\|\cdot\|_\mathcal F$).
\end{theorem}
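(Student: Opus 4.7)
The plan is to reduce the tightness question to the asymptotic equicontinuity criterion in $\ell^\infty(\mathcal F)$ (Theorem 1.5.7 in \cit{VW96}) via the Fubini-type duality $\int f\,d((P_n-P)\ast\mu_n) = \int (f\ast\bar\mu_n)\,d(P_n-P)$, valid by $\mathcal F \subseteq L^1(|\mu_n|)$. Under this identification, $\sqrt n(P_n-P)\ast\mu_n$ becomes the empirical process $\sqrt n(P_n-P)$ indexed by the moving class $\tilde{\mathcal F}_n$, and tightness on $\mathcal F$ is preserved because the map $f \mapsto f\ast\bar\mu_n$ is an isometric correspondence. Convergence of finite-dimensional distributions is standard: for fixed $f_1,\dots,f_k \in \mathcal F$, the Lindeberg/Lyapunov CLT applies to the triangular array $n^{-1/2}\sum_i (f_j\ast\bar\mu_n)(X_i)$ using the envelope $M_n$ from (a) together with variance control from (b); condition (d) then ensures that the resulting covariance structure on $\mathcal F$ defines a $P$-pregaussian Gaussian process and hence a genuine Borel random element of $\ell^\infty(\mathcal F)$, since symmetric convex hulls and $L^2(P)$-closures preserve pregaussianity.

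It remains to prove asymptotic equicontinuity: for each $\varepsilon,\eta>0$ one needs $\delta>0$ and $n_0$ with
\[
\Pr^{*}\Bigl(\bigl\|\sqrt n(P_n-P)\bigr\|_{(\tilde{\mathcal F}_n)'_\delta} > \varepsilon\Bigr) < \eta \qquad (n \ge n_0),
\]
where note that condition (b) guarantees the $L^2(P)$-diameter bound $\|f\ast\bar\mu_n - g\ast\bar\mu_n\|_{2,P} \le 2\|f-g\|_{2,P}$ on $(\tilde{\mathcal F}_n)'_\delta$ in the relevant regime, so the moving class remains equicontinuous-friendly. The argument then follows the Gin\'e--Zinn scheme as in \cit{GN08}: by a L\'evy-type desymmetrization inequality, the probability above is controlled by the Rademacher-symmetrized supremum $\|n^{-1/2}\sum_i\varepsilon_i\varphi(X_i)\|_{(\tilde{\mathcal F}_n)'_\delta}$. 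Split this class into its $L^2(P)$-ball of radius $n^{-1/4}$, whose contribution is precisely the quantity handled by hypothesis (c), and the complementary annulus.

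On the annulus, apply a Hoffmann--J\o rgensen/maximal inequality to reduce to the expected conditional supremum of a sub-Gaussian Rademacher process; by Dudley's chaining this is bounded by the entropy integral $\int_{n^{-1/4}}^{2\delta} \sqrt{H(\tilde{\mathcal F}_n,L^2(P),\eta)}\,d\eta$, which under hypothesis (e) is dominated by $\int_0^{2\delta} \eta^{-1}\sqrt{\lambda_n(\eta)}\,d\eta$. Since $\lambda_n(\eta)\to 0$ as $\eta\to 0$ uniformly in $n$, this last quantity can be made arbitrarily small by choosing $\delta$ small, independently of $n$. The envelope bound (\ref{envelope}) is what permits the Hoffmann--J\o rgensen truncation step: it ensures $M_n\sqrt{\lambda_n(n^{-1/4})}$ stays bounded, so that the sub-Gaussian chaining bound is not destroyed by the growing envelope.

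The main technical obstacle is precisely this uniform-in-$n$ chaining, because the index class $\tilde{\mathcal F}_n$ moves with $n$ and its envelope $M_n$ may blow up. Reconciling this with the entropy bound and the small-ball randomization condition (c) requires the delicate quantitative balance encoded in (\ref{envelope}); the remaining steps (symmetrization, Dudley, Lyapunov) are standard once this balance is in hand. Since the full implementation of these estimates is carried out in the proof of Theorem 3 in \cit{GN08}, I would largely import that machinery, verifying only that the present hypotheses fit into its framework.
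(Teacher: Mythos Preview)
Your proposal is essentially the same approach as the paper's: both reduce to the proof of Theorem~3 in \cit{GN08} and verify that the present hypotheses fit into that framework. The paper's own proof is in fact a one-paragraph remark listing the differences from \cit{GN08} (no requirement $\mu_n(\R)=1$, the weakened (b), and (d) replacing translation invariance) and noting that Theorem~0.3 in \cit{D73} supplies the fact that $L^2(P)$-closures of symmetric convex hulls of pregaussian classes remain pregaussian.

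Two small corrections to your sketch. First, the theorem asserts only tightness, so your paragraph on finite-dimensional convergence is unnecessary; the paper explicitly observes that dropping the probability-measure and exact-variance requirements is permitted precisely because only tightness is claimed. Second, you misplace the role of hypothesis~(d): it is not used to certify that the limit on $\mathcal F$ is a Borel random variable (that follows already from $\mathcal F$ being $P$-pregaussian), but rather enters the \emph{tightness} argument itself. In \cit{GN08} translation invariance of $\mathcal F$ is used to embed $\cup_n\tilde{\mathcal F}_n$ into a pregaussian set so that the Gaussian randomization/comparison step in the equicontinuity proof goes through; here (d) plays exactly that role, and the cited result of Dudley is what guarantees the required pregaussianity of the ambient class.
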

\begin{proof}
The differences to Theorem 3 in \cit{GN08} are: We do not require $\mu_n(\mathbb R)=1~\forall n$, and (b) is slightly weakened, both permitted as we only establish tightness in this theorem and not convergence of the finite-dimensional distributions. Moreover the new condition (d), which replaces translation invariance of $\mathcal F$ by a more generic condition. Note that Theorem 0.3 in \cit{D73} implies that $L^2(P)$-closures of symmetric convex hulls of pregaussian classes are again pregaussian, which is all that is needed for the proof of Theorem 3 in \cit{GN08} to apply.
\end{proof}

We now verify these conditions for the classes above, with $d\mu_n(y) =K^{(0)}_h(y)dy$. Let us first show that the class $\mathcal F$ from (\ref{EqF}) is indeed $P$-pregaussian. By Proposition \ref{PropL2PEst} applied to
\[f(x)=e^x(e^{-t}{\bf 1}(x\le t)-e^{-s}{\bf 1}(x\le s)),\quad t,s\le-\zeta\text{  (and symmetrically for $t,s\ge\zeta$)}
\]
and by  the same estimates as in the proof of Theorem \ref{scgauss}
\begin{equation}\label{EqModulusq}
\norm{q(\cdot-t)-q(\cdot-s)}_{2,P}\lesssim \abs{t-s}^{\eps(3+2\eps)/(2+2\eps)}.
\end{equation}
Moreover,  the tail bound for the third term in that proof applies exactly here
such that the same arguments show that $\mathcal F$ has polynomially growing
covering numbers and is thus pregaussian. In particular, $\cal F$ is bounded in
$L^2(P)$. The functions $q(\cdot-t)$ are in $B^{(1+\eps)/2}_{1,\infty}(\mathbb
R)\subset L^1(\R) \cap L^2(\R)$ and thus in $L^1(|\mu_n|)$ since $K$ is
bounded.

\begin{enumerate}
 \item the envelopes of $q(\cdot-t) \ast K^{(0)}_h$ are of order
 $M_n \lesssim h^{-\alpha'}$ for $\alpha'\in((1-\eps)/2,\alpha)$ when
 $h=h_n\gtrsim n^{-1/(4\alpha)}$ since the sup-norm is bounded by the
 BV-norm, which in turn is bounded in point (c) below.

\item Let $g \in \mathcal F_\delta'$, then $\|K^{(0)}_h \ast g\|_{2,P} \le \|K^{(0)}_h \ast g - g\|_{2,P} + \delta$ and the result follows from the triangle inequality if we show $\|K^{(0)}_h \ast f - f\|_{2,P}\to 0$ uniformly over $f \in \mathcal F$. From \eqref{EqPhiL2} above, noting $\supp(K^{(0)}_h \ast (i\cdot g^s_t))\cap (-\zeta/2,\zeta/2)=\varnothing$, we conclude
\[ \|K^{(0)}_h \ast f - f\|_{2,P}\lesssim \norm{(1+\abs{u})^{-\eps}({\cal F}K_h^{(0)}-1)}_{L^{2+4/\eps}}+\norm{K_h^{(0)}\ast f-f}_{L^2}.
\]
Since ${\cal F}K_h^{(0)}(u)$ is uniformly bounded and tends to $1$ pointwise and since $(1+\abs{u})^{-(2\eps+4)}$ is integrable, by dominated convergence the first norm tends to zero for $h\to 0$. Similarly, as ${\cal F}f\in L^2$ holds, $\norm{({\cal F}K_h^{(0)}-1){\cal F}f}_{L^2}\to 0$ follows and by Plancherel's theorem also the second norm converges to zero. This convergence is uniform because of $\abs{{\cal F}f(u)}=\abs{{\cal F}q(u)}$ for all $f\in\mathcal F$ and since $q\in L^2(\R)$.

\item The class $\{K^{(0)}_h \ast q(\cdot-t): |t| \ge \zeta \}$ consist of translates of the fixed
function $K^{(0)}_h \ast q$, which is a function of bounded variation with
BV-norm of size $h^{-\alpha'}$ for some $\alpha'\in((1-\eps)/2,\alpha)$
using $q \in B^{1-\alpha'}_{11}(\mathbb R)$ from the argument after
(\ref{q}) and the estimate (61) in \cit{GN08} (whose proof applies also to
the truncated kernels). The envelope $M_n$ of $\tilde{\mathcal F_n}$ is
then of the same size since the BV-norm bounds the supremum norm. Moreover
the class $\{K^{(0)}_h \ast q(\cdot-t): |t|\ge \zeta \}$ has polynomial
$L^2(Q)$-covering numbers, uniformly in all probability measures $Q$. To
see this we argue as in Lemma 1 in \cit{GN09}: note that a function of
bounded variation is the composition of a $1$-Lipschitz function with a
monotone function. The set of all translates of a monotone function has
VC-index 2, and hence has polynomial covering numbers by Theorem 5.1.15 in
\cit{PG99}, with constants $A,v$ there independent of $n$. Composition with a
$1$-Lipschitz map preserves the entropy, and the estimate (22) in
\cit{GN08} with envelopes $M_n \sim h_n^{-\alpha'}$ and $H_n(\eta) \equiv
H(\eta)\thicksim\log (\eta)$ now shows that
\begin{equation*}
E\left\Vert \frac{1}{\sqrt{n}}\sum_{i=1}^{n}\varepsilon_{i}f(X_{i})\right\Vert _{(\tilde{\mathcal{F}}_{n})'_{1/n^{1/4}}} \lesssim \max \left[\frac{\sqrt {\log n}}{n^{1/4}}, \frac{h_n^{-\alpha'}}{\sqrt n} \log n\right] \to 0
\end{equation*}
as $n \to \infty$, in view of $h_n^{-\alpha'}\lesssim h_n^{-\alpha}\lesssim n^{1/4}$.

 \item
Using that $K_h^{(0)}$ is supported in $[-\zeta/2, \zeta/2]$, one shows by standard arguments that the class of functions
$$\bigcup_{h>0} \left\{x \mapsto \int_\mathbb R q(x-t-y)K^{(0)}_h(y)dy: |t| \ge \zeta \right\}$$
is in the $L^2(P)$-closure of $\|K\|_{L^1}$-times the symmetric convex hull of the $P$-pregaussian class $\bar {\mathcal F}=\left\{q(\cdot-t): |t| \ge \zeta/2\right\}.$ To see this one can either make a minor modification of the argument in Lemma 1 in \cit{GN08}, or notice that, $\left\{q(\cdot-t): |t| \ge \zeta/2\right\}$ being bounded in the separable Banach space $L^2(P)$ (cf.~after (\ref{EqModulusq})), the integrals $\int q(\cdot-t-y)K^{(0)}_h(y)dy$ are $L^2(P)$-valued Bochner-integrals, and can thus be obtained as $L^2(P)$-limits of simple functions lying in the symmetric convex hull of $\{z \mapsto \|K\|_{L^1} q(z-t): |t| \ge \zeta/2\}$ (e.g., Appendix E and Theorem E.3 in \cit{D99}).

\item Write $f, g$ for distinct translates of $q$ (elements of $\mathcal F$), and deduce from Minkowski's inequality for integrals that
\begin{eqnarray*}
\left(E\left[(f \ast K_h^{(0)}(X) - g \ast K_h^{(0)}(X))^2\right]\right)^{1/2} &\le& \int_{-\zeta/2}^{\zeta/2} |K_h(u)| \|f(-u-\cdot)-g(-u-\cdot)\|_{2,P}du \\
&\le& \|K\|_{L^1} \sup_{|u| \le \zeta/2}\|f(u-\cdot)-g(u-\cdot)\|_{2,P} .
\end{eqnarray*}
 Since entropy bounds are preserved under Lipschitz transformations, and since $$\{q(u-\cdot-t): |t| \ge \zeta, |u| \le \zeta/2\} \subset \{q(u-\cdot-t): |t| \ge \zeta/2\}$$ has polynomial $L^2(P)$-covering numbers by the same arguments as after (\ref{EqModulusq}), we deduce the bound $H(\tilde {\mathcal F}_n, L^2(P), \eta)\lesssim \log (\eta^{-1})$ for  every $\eta>0$ small enough, independent of $n$. Conclude that we can take $\lambda_n (\eta) = \log (\eta^{-1}) \eta^2$, so that the envelope condition \eqref{envelope} becomes
\begin{equation} \label{envcon}
h_n^{-\alpha'} \lesssim (\log n)^{-1/2} n^{1/4},
\end{equation}
which is satisfied due to $\alpha'<\alpha$ and $h_n^{-\alpha}\lesssim n^{1/4}$, completing the proof.
\end{enumerate}

\medskip

\noindent \textbf{Acknowledgement.} The authors would like to thank an anonymous referee, Jakob S\"{o}hl and Mathias Trabs for a careful reading of the manuscript and several helpful suggestions and corrections, as well as Chris Klaassen for useful discussions.

\end{document}